\def\1{\hbox{1\kern-.35em\hbox{1}}}
\newtheorem{theorem}{Theorem}[section]
\newtheorem*{theorem*}{Theorem}
\newtheorem{lemma}[theorem]{Lemma}
\newtheorem*{proposition*}{Proposition}
\newtheorem{corollary}[theorem]{Corollary}
\theoremstyle{definition}
\newtheorem{definition}[theorem]{Definition}
\newtheorem{example}[theorem]{Example}
\theoremstyle{remark}
\newtheorem{remark}[theorem]{Remark}
\numberwithin{equation}{section}
\newcommand{\bea}{\begin{eqnarray}}
\newcommand{\eea}{\end{eqnarray}}
\newcommand{\be}{\begin{eqnarray*}}
\newcommand{\ee}{\end{eqnarray*}}
\newcommand{\Z}{{\mathbb Z}}
\newcommand{\C}{{\mathbb C}}
\newcommand{\fb}{{\mathfrak b}}
\newcommand{\fg}{{\mathfrak g}}
\newcommand{\fh}{{\mathfrak h}}
\newcommand{\fn}{{\mathfrak n}}
\newcommand{\fp}{{\mathfrak p}}
\newcommand{\fq}{{\mathfrak q}}
\newcommand{\fu}{{\mathfrak u}}
\newcommand{\gl}{{\mathfrak{gl}}}
\newcommand{\osp}{{\mathfrak{osp}}}
\newcommand{\id}{{\rm id}}
\newcommand{\Hom}{{\rm Hom}}
\newcommand{\cO}{{\mathcal O}}
\def\TC#1{{\mathcal C}^{\rm Trun}_{#1}}
\def\NC#1{{\mathcal C}^{\rm Norm}_{#1}}
\newcommand{\cL}{{\mathcal L}}
\def\soe{\preccurlyeq}
\def\LE{_{\rm L}}
\def\RI{_{\rm R}}
\def\BO{_{\rm B}}
\def\a{\alpha}
\def\ch{{\rm ch{\ssc\,}}}
\def\th{\theta}
\def\b{\beta}
\def\d{\delta}
\def\D{\Delta}
\def\g{\gamma}
\def\G{\Gamma}
\def\l{\lambda}
\def\L{\lambda}
\def\si{\sigma}
\def\sc{\scriptstyle}
\def\ssc{\scriptscriptstyle}
\def\dis{\displaystyle}
\def\ul{\underline}
\def\D{\Delta}
\def\Lra{\Longleftrightarrow}
\def\rb{\raisebox}
\def\dd#1#2#3#4{{\widehat c}{\ssc\,}_{#4}(#2)}
\def\PP{P}
\def\Dr{P_{\rm Lex}^\l}
\def\CO{{\cal O}}
\def\N{\mathbb{N}}
\def\cal#1{{\mathfrak{#1}}}
\def\es{\varepsilon}
\def\Hom{{\rm Hom}}
\def\cp{}
\def\tt{{\rm aty}}
\def\ot{{\rm typ}}
\def\nn{{n}}
\def\mm{{m}}
\def\ITEM{\item[\mbox{$\bullet$}]}
\def\bi{{\ul{i}}}
\def\bj{{\ul{j}}}
\def\Sr{{\rm Sym}}
\def\ZZ{\Z_{{\ssc++}}}
\def\a{\alpha}\def\b{\beta}
\def\equa#1#2{
\begin{equation}\label{#1}#2\end{equation}}
\def\equan#1#2{$$#2$$}
\numberwithin{equation}{section}
\begin{document}

%

\title[Character and dimension formulae for ${\mathfrak{q}}_n$]
{Character and dimension formulae for\\  queer Lie superalgebra}
\author[Yucai Su]{Yucai Su}
\address{Department of Mathematics, Tongji University, Shanghai 200092, China}
\email{ycsu@tongji.edu.cn}
\author[R. B. ZHANG]{R. B. ZHANG}
\address{School of Mathematics and Statistics,
University of Sydney, NSW 2006, Australia}
\email{rzhang@maths.usyd.edu.au}

\begin{abstract}
Closed formulae  are constructed for the characters and dimensions of the
finite dimensional simple modules of the queer Lie superalgebra $\fq(n)$.
This is achieved by refining Brundan's algorithm for computing simple $\fq(n)$-characters.
\end{abstract}
\subjclass[2010]{Primary 17B10.}
\thanks{This work was supported by the Australian Research
Council (grant no.~DP0986551), NSF of China (grant
no.~10825101), SMSTC (grant no.~12XD1405000) and
the Fundamental Research Funds for the Central Universities of China.}
\maketitle

%
%

\section{Introduction}

There has been considerable progress in the representation theory \cite{Kac2} of
Lie superalgebras \cite{Kac, Sch} in recent years.
In \cite{B}, Brundan reformulated Serganova's Kazhdan-Lusztig approach \cite{Se96}  to
the parabolic category $\cO$ of the general linear superalgebra
$\gl(m|n)$ using quantum group techniques,  obtaining a very practicable algorithm
for computing the generalized Kazhdan-Lusztig polynomials.
This enabled him to prove  the conjecture of \cite{VZ} on the composition
factors of Kac modules. By using this algorithm, the present authors \cite{SZ2007}
derived closed character and dimension formulae for the finite dimensional
simple $\gl(m|n)$-modules.  An algorithm was developed by Serganova and Gruson
\cite{Se98, GS}, which enables one to compute the characters of the finite dimensional simple modules for
the orthosymplectic Lie superalgebras $\osp(m|2n)$. In the special case of $\osp(m|2)$,
the characters and dimensions of the simple modules were worked out explicitly \cite{SZ2012-2}.

Properties of the parabolic categories $\cO$ for simple Lie superalgebras
(relative to the maximal parabolic subalgebra with a purely even Levi subalgebra)
have also been investigated extensively.
It was discovered in \cite{CZ1, CZ2} that the representation theory
of the classical Lie superalgebras $\gl(m|n)$ and $\osp(m|2n)$
was related to that of infinite dimensional classical Lie algebras in some precise way.
This led to the super duality conjecture in \cite{CWZ}, which was proven in \cite{CL, BS} for $\gl(m|\infty)$
and generalised to $\osp(m|\infty)$ in \cite{CLW}.  A Jantzen type filtration for parabolic Verma modules
of the type I and exceptional Lie superalgebras was developed \cite{SZ2012-1, SZ2013}.
It was shown that the layers of the Jantzen type filtration
are semi-simple and the multiplicities of their composition factors are determined by
the coefficients of the inverse of Serganova's generalised Kazhdan-Lusztig polynomials.

Penkov and Serganova \cite{PS97} developed an algorithm for
computing the characters of finite dimensional simple modules
for the queer Lie superalgebra $\fq(n)$.
They used a super analogue of the Bott-Borel-Weil theory \cite{PS, PS1} to realise
$\fq(n)$-representations on cohomology groups of vector bundles on super flag  varieties
of the corresponding supergroup $Q_n$. Their algorithm
enables one to understand composition factors of the cohomology groups combinatorially.
Brundan provided an important new input into the problem \cite{B1} by
translating it to a problem about some canonical bases of the quantum
group associated with the infinite dimensional Lie algebra of type $\fb_\infty$.
By working out the combinatorics of the canonical bases,
he obtained a more efficient algorithm for
determining the composition factors.

The algorithms are sufficient in some situations, especially when the problems concerned
require only knowledge of characters of specific simple $\fq(n)$-modules.
However,   for some problems, e.g., if one wants to
determine the dimensions of finite simple modules in a uniform way,
the algorithms are not adequate and a closed formula for the characters will be needed.
At any rate, a character formula is more desirable.

In this paper, we develop a closed formula of Weyl-Kac type for the characters of the finite
dimensional simple modules for the queer Lie superalgebra $\fq(n)$,
and derive from it a formula for the dimensions of the simple modules.
The character formula is obtained by using a refined version of the algorithm of \cite{B1}.
The main results of the paper are presented in Section 4. In particular,
Theorem \ref{main-theo} gives the character and dimension formulae,
and Corollary \ref{coooo} rewrites the formulae in more convenient terms.
This work is a continuation of \cite{SZ2007} but for $\fq(n)$.

The organization of the paper is as follows. In Section 2 we
present some background material on $\fq(n)$,
which will be used throughout the paper.  In Section 3 we develop some combinatorics
for the Euler modules. \cite[Main Theorem]{B1} is reformulated in terms of
weight diagrams, providing an algorithm which is more effective for what we are trying to
do here.   Section 4 contains the main results.

\section{Preliminaries}\label{Preliminaries}
\subsection{The queer Lie superalgebra $\fq(n)$}

A {\it super vector space} over $\C$ is a $\Z_2$-graded vector space $V = V_{\bar0}\oplus V_{\bar1}$, where $\Z_2
 =\{\bar0,\bar1\}$. Denoted by $[v]$ the {\it parity} of $v$ which is either $\bar0$ or $\bar1$ according to whether
 $v\in V_{\bar0}$ or  $v\in V_{\bar1}$.
 A vector $v$ is {\it even} if $[v]=\bar0$ and {\it odd} otherwise.
For two super vector spaces $V$ and $W$, a linear map $f : V\to W$ is homogeneous
of degree $d\in\Z_2$ if $f(V_a )\subset W_{a+d}$ for $a\in\Z_2$. Denote $\Hom(V,W)_d =\{f : V\to W\,|\,f$
is homogeneous of degree $d\}$. Then
  ${\Hom}(V,W)=\Hom(V,W)_{\bar0}\oplus\Hom(V,W)_{\bar1}$ is a super
vector space.
The {\it dual super vector space} of $V$ is defined to be $V^*=\Hom(V,\C)$.
For homogeneous linear maps $f_1 : V_1\to W_1$ and $f_2: V_2\to W_2$,
the {\it tensor product} $f_1\otimes f_2 : V_1\otimes V_2\to W_1\otimes W_2$ is a homogeneous linear map
defined by $(f_1\otimes f_2)(v_1\otimes  v_2) =(-1)^{[f_2][v_1]} f_1(v_1)\otimes
 f_2(v_2)$.
The super vector spaces and the homogeneous mappings above define a category, which
is not abelian. However, if we restrict the Hom-sets to the even mappings, we obtain
an abelian category, denoted by $\CO$. Denote by $\Pi: \CO\to\CO$ the parity change functor, defined as follows: if
$V\in\CO$ we set $(\Pi(V ))_d = V_{d+1}$ for $d\in\Z_2$, and if $f : V\to W$ in $\CO$ we set
$\Pi(f) = f$ as mappings. We denote by
 $\Pi(\C)$ the odd super vector space of dimension one.

For any positive integer $n$, let $\fg = \fq(n)$ denote the {\it queer Lie superalgebra} consisting of the block
matrices of the form
$({}\,^{A\ B}_{B\ A})$
for $A,B\in gl_n(\C)$. The even (resp., odd) subspace $\fg_{\bar0}$ (resp., $\fg_{\bar1}$) of $\fg$
consists of block matrices with
$B = 0$ (resp., $A = 0$). Let
\begin{eqnarray}\label{eq:basis}\label{basis-}
e^{\bar0}_{ij}=\begin{pmatrix}
E_{i j} &0\\ 0 &  E_{i j}
\end{pmatrix},
\quad
e^{\bar1}_{ij}=\begin{pmatrix}
0& E_{i j}\\  E_{i j} & 0\end{pmatrix}
\mbox{ \ for \ \ }1\le i, j\le n,
\end{eqnarray}
denote respectively the basis elements of $\fg_{\bar0}$ and $\fg_{\bar1}$,
where $E_{ij}\in gl_n(\C)$ is the matrix unit with $1$ at position $(i, j)$ and $0$ elsewhere.

Then the supercommutator is defined as follows:
\equa{supercom1}{[e^s_{ij}, e^t_{kl}] =\d_{jk}e^{s+t}_{il}-(-1)^{st}\d_{il}e^{s+t}_{kj},}
where $s,t\in\Z_2$ and $1\le i, j\le n$. Take
$\fh={\rm span}_\C\{e^s_{ii},\,|\,s\in\Z_2,\,1\le i\le n\}$ to be the {\it Cartan Lie subsuperalgebra}.
Then ${\rm sdim}(\fh) = n + n\bar\es$ and $\fh_{\bar0}$ is a Cartan subalgebra of $gl_n(\C)$. The
elements $H_i=e^{\bar0}_{ii}$ (resp., $\hat H_i=e^{\bar1}_{ii}$)
for $1\le i\le n$ form a basis in $\fh_{\bar0}$ (resp., $\fh_{\bar1}$).
Let $\es_i,\, 1\le i\le n$, be the basis of $\fh_{\bar0}^*$ dual to the basis $\{H_i\mid 1\le i\le n\}$ of $\fh_{\bar0}$,
that is, $e_i(H_j)=\delta_{i j}$ for all $i, j$. Then $\fh_{\bar0}^*$ is equipped with
the bilinear form $(\cdot,\cdot)$ defined by
\equa{form-}{(\es_i,\es_j)=\d_{ij}, \quad \forall i, j.}
We have a root space decomposition of $\fg$ with respect to $\fh_{\bar0}$, given by
\equa{root-de}{\fg =\fh\bigoplus\left(\bigoplus_{\a\in\D_{\bar0}}\fg^\a\right)
\bigoplus\left(\bigoplus_{\a\in\D_{\bar1}}\fg^\a\right),}
where $\D_{\bar0}= \{\es_i-\es_j \mid 1\le i, j\le n, \ i\ne j\}$ is the set of even roots,  and
$\D_{\bar1}$ is the set of odd roots,  which is isomorphic to $\D_{\bar 0}$ as set.
Let $\D= \D_{\bar0}\sqcup\D_{\bar1}$ (disjoint union).
We have
$\fg^\a=\C e^s_{ij}$ if $\a=\es_i-\es_j\in\D_s,\,s\in\Z_2$.
Then $\D_{\bar0}$ is the root system of $\fg_{\bar0}= gl_n(\C)$.
 The {\it positive root system} is $\D^+ =\D^+_{\bar0}\sqcup\D^+_{\bar1}$ with
$\D^+_{\bar0}=\{\es_i-\es_j\, |\,1\le i < j\le n\}$ and similarly for $\D^+_{\bar1}$.
The {\it simple root system} is $\Pi^+ =\Pi^+_{\bar0}\sqcup\Pi^+_{\bar1}$ with $\Pi^+_{\bar0}
=\{\es_i-\es_{i+1}\,|\,1\le i\le n-1\}$ and similarly for $\Pi^+_{\bar1}$. Set \equa{rho-0}{\rho_{\bar0}=
\frac12\sum_{\a\in\D^+_{\bar0}}\a=\frac12(n-1,n-3,...,1-n),}
 and $\rho_{\bar1}= \frac12\sum_{\a\in\D^+_{\bar1}}\a=\rho_{\bar0}$
and $\rho=\rho_{\bar0}-\rho_{\bar1}=0$. Note that $\rho_{\bar0}$ corresponds to half of the sum
of the positive roots of the Lie algebra $\fg_{\bar0}\cong gl_n(\C)$.

The {\it Borel subsuperalgebra} of $\fg$ is $\fb=\fh\oplus\fn^+$, where $\fn^+ =\oplus_{\a\in\D^+}\fg^\a$.
Take any parabolic subalgebra $\fp_{\bar0}\supseteq\fb_{\bar0}$ of $\fg_{\bar0}\cong gl_n(\C)$,
and let $\D(\fp_{\bar0})$ be the set of roots of $\fp_{\bar0}$, that is, $\alpha\in \D(\fp_{\bar0})$ if and only if $\fg^\alpha\subset\fp_{\bar0}$.
Let $\D(\fp_{\bar1})$ be a subset of $\D_{\bar1}$ which is isomorphic to $\D(\fp_{\bar0})$, and set $\D(\fp)=\D(\fp_{\bar0})\sqcup\D(\fp_{\bar1})$ (disjoint union).
Then $\fp=\fh\oplus\left(\bigoplus_{\alpha\in\D(\fp)}\fg^\alpha\right)$ is a parabolic subalgebra
containing the Borel subalgebra $\fb$. We may also define the lower triangular Borel subalgebra $\bar{\fb}$
and parabolicc subalgebras $\bar{\fp}\supset\bar{\fb}$.

\subsection{Integral dominant weights}

We will express a weight $\l\in\fh^*_{\bar0}$ in term of its coordinate relative to
the basis $(\es_1, . . . , \es_n)$ as $\l =\sum\limits_{i=1}^n\l_i\es_i$ and simply write
\equa{weight1}{\l =(\l_1, . . . , \l_n)\in\C^n.}
Introduce the following integers
\begin{eqnarray}\label{ZH(l)}
&&z(\l)=\#\{i\,|\,\l_i=0\}\quad \mbox{ (the number of zero entries of $\l$)},\nonumber\\
&&\bar z(\l)=0\mbox{ if $z(\l)$ is even or $1$ else  \quad (the parity of $z(\l)$)},\nonumber\\
&&h(\l)=n-z(\l)\quad \mbox{ (the number of nonzero entries of $\l$)}.
\end{eqnarray}
A weight $\l$ is called {\it integral} if $\l\in\Z^n$, and
{\it integral dominant} if $\l\in\ZZ^n$, where
\equa{Domi-}{\ZZ^n=\{\l\in\Z^n\mid \l_1\ge\cdots\ge\l_n \
\text{and $\l_i=\l_{i+1}$ implies $\l_i=0$ for any $i$}\}.}
Let $W\cong \Sr_n$ be the {\it Weyl group} of $\D_{\bar0}$,
which acts on both $\fh_{\bar0}$ and $\fh^*_{\bar0}$ in the usual way.
As $\rho=0$, the {\it dot action} of $W$ coincides with the usual action.
Define a total order on $\ZZ^n$ lexicographically, namely
\equa{partial-order}{\l<\mu\ \Longleftrightarrow\
\mbox{ for the first $p$ with $\mu_p\ne\l_p$, we have $\mu_p<\l_p$}.}

\def\L{\l}\noindent
\begin{definition}\label{regular-def}
An integral weight $\L$ is called
\begin{enumerate}
\item[(1)]
 {\it regular}
or {\it non-vanishing} (in sense of \cite{HKV, VHKT}) if it is $W$-conjugate to
an integral dominant weight
(which is denoted by $\L^+$ throughout the paper);
\item[(2)]
{\it vanishing} otherwise (since the right-hand side of \eqref{g-Schur-P} vanishes in this case).
\end{enumerate}
\end{definition}
Obviously,
$\L$ is regular if and only if
\equa{l-ru}{\mbox{any nonzero number appears at most once as an entry of $\l$.}}
Let $\L$  in \eqref{weight1} be a regular weight.
A positive root $\g=\es_i-\es_j$ is an {\it atypical root of $\L$} if
$\l_i=-\l_j$.
In case $\lfloor \frac{z(\l)}{2}\rfloor>0$ (where $\lfloor a\rfloor$ denotes the integer part of $a$),
we always take $\lfloor \frac{z(\l)}{2}\rfloor$ atypical roots $\g_p=\es_{m_p}-\es_{n_p},\,
p=1,...,\lfloor \frac{z(\l)}{2}\rfloor$, to satisfy \equa{zero-aty}
{\l_{m_p}=\l_{n_p}=0,\ \ m_{\lfloor \frac{z(\l)}{2}\rfloor}
<\cdots<m_2<m_1<n_1<n_1<\cdots<n_{\lfloor \frac{z(\l)}{2}\rfloor}.}
Furthermore, if $\bar z(\l)=1$, we choose $m_1,n_1$ to be the indices such that
the extra zero entry lies in between, i.e., there exists a unique $k_0$ with $m_1<k_0<n_1$ and $\l_{k_0}=0$.
Denote by $\G_\l$ the set of atypical roots of $\l$
(cf.~(\ref{(lambda-1)}) and (\ref{aty-llll})):
\equa{G-L}
{\G_\l=\{\es_i-\es_j
\,|\,\,\mbox{either $\l_i=-\l_j\ne0$, or else $i=m_p,j=n_p$ for some $p$}\}.
}
Set $r=\#\G_\L.$
We also denote $\#\L=r$, called the {\it degree of
atypicality of $\L$}.
 A weight $\L$
is called
\begin{enumerate}
\ITEM
{\it typical} if $r=0$;
\ITEM
{\it atypical} if $r>0$ (in this case $\L$ is also called
an {\em $r$-fold atypical weight}).
\end{enumerate}
When $\l$ is integral dominant,
we always arrange the atypical roots of $\l$ as
$\g_p=\es_{m_p}-\es_{n_p}$ for $1\le p\le r$ with $m_p<m_{p+1}<n_{p+1}<n_p$ for $1\le p< r$.
Thus,\begin{equation}\label{aty-llll}
\G_\l=\{\g_1<\g_2<\cdots<\g_r\},
\end{equation}
by the order \eqref{partial-order}, and $\l$ has the following form:
\equa{(lambda-1)}
{\mbox{$
\L\!=\!(\L_1,...,
\put(4,11){$\line(0,1){8}$}
\put(4,19){$\line(1,0){150}\,\rb{-4pt}{$\g_r$}\,\line(1,0){30}$}
\L_{\mm_r}
, ...,
\L_i, ...,
\put(4,10){$\line(0,1){5}$}
\put(4,15){$\line(1,0){20}\rb{-5pt}{$\,\g_1\,$}\line(1,0){30}$}
\L_{\mm_1}
, ..., \L_k, ...,
\put(4,10){$\line(0,1){5}$}
\put(4,15){$\line(-1,0){25}$}
\L_{\nn_1}
, ...,
\L_{j}, ...,
\put(4,11){$\line(0,1){8}$}
\put(4,19){$\line(-1,0){40}$}
\L_{\nn_r}
,...,\L_{ n})$},
}

\subsection{Formal characters}

Let $V=\oplus_{\l\in\fh_{\bar0}^*}V_\l$ be a {\em weight module} over $\fg$, where
\equan{weight-space}
{V_\l=\{v\in V\,|\,hv=\l(h)v,\,\forall\,h\in\fh_{\bar0}\}\mbox{ \ with \ }\dim V_\l<\infty,
}
is the weight space of weight $\l$. We denote by $\ch V=\sum_{\l\in\fh_{\bar0}^*}({\rm dim\,}V_\l) e^\l$
its {\em character}.

Given $\l\in\ZZ^n$, there is a unique (up to isomorphism) finite dimensional
simple $\fg$-module $L(\l)$ with highest weight $\l$ such that $\dim({\rm End}_\fg L(\l))=1$ or $2$.
We say that $L(\l)$ is of {\it type M} in the former case, and {\it type Q} in the later case.
For example, the
natural representation $V=L(\es_1)$ of $\fg$ is of type Q, as there exists an odd automorphism
$\theta:v_i\mapsto v_{i+n}$ and $v_{i+n}\mapsto -v_i$ for $1\le i\le n$, where $\{v_i\,|\,1\le i\le 2n\}$
is the natural basis of $V$.
In general, $L(\l)$ is of type M if $h(\l)$ is even, and of type Q otherwise.

For each $\l\in\Z^n$, there exists a unique simple $\fh$-module $\fu(\l)$. Each $e_{i i}^{(0)}$ acts on $\fu(\l)$
by $\l_i \id_{\fu(\l)}$, thus it follows from $[e_{i i}^{(1)}, e_{j j}^{(1)}]=2\d_{i j} e_{i i}^{(0)}$
for all $i, j$ that the actions of all $e_{i i}^{(1)}$ on $\fu(\l)$ generate an irreducible representation
of the Clifford algebra of degree $h(\l)$.  Hence $\dim\fu(\l)= 2^{\lfloor \frac{h(\l)+1}{2}\rfloor}$
and the character of $\fu(\l)$ is given by $2^{\lfloor \frac{h(\l)+1}{2}\rfloor}e^\l$.

Denote by $G=Q_n$ the supergroup with Lie superalgebra $\fq(n)$.
Let $\bar{B}$ be the lower triangular Borel subgroup of $Q_n$
with Lie superalgebra $\bar{\fb}$, and let $\bar{P}(\l)\supseteq\bar{B}$ be
the largest parabolic subgroup such that $\fu(\l)$ can be lifted to a $\bar{P}(\l)$-module.
The sheaf cohomology groups of  the associated
super vector bundle $\cL(\fu(\l))=Q_n\times_{\bar{P}(\l)}\fu(\l)$,
\begin{eqnarray}\label{higher-mod}
H^i(\l) = H^i(G/\bar{P}(\l), \cL(\fu(\l))), \quad i=0, 1, ...,
\end{eqnarray}
admit natural $G$-actions. It is known that $H^i(\l)$ are finite dimensional for all $i$ and vanish for $i\gg 0$.
In particular, $H^0(\l)$ contains a unique simple submodule $L(\l)$,  and $\{L(\l)\}_{\l\in\ZZ^n}$ is the complete set
of pairwise non-isomorphic simple $G$-modules. Here  $L(\l)$ is the highest weight $\fg$-module
with highest weight $\l$ relative to $\fb$ (not $\bar\fb$).

For any $\l\in\ZZ^n$, we define the virtual module $E(\l)$, the {\it Euler module}, by
\begin{equation}\label{Euler-mod}
E(\l)=\sum_{i\ge0}(-1)^i H^i(\l),
\end{equation}
which should be interpreted as an element in the Grothendieck group of the category
of finite dimensional $\fg$-modules. The {\it Euler character} ${\rm ch\,}E(\l)= \sum_{i\ge0}(-1)^i {\rm ch\,}H^i(\l)$ is
given by (cf.~\cite{PS, B1, BK}) the formula
$
{\rm ch\,}E(\l)
=2^{\lfloor\frac{h(\l)+1}{2}\rfloor}P_\l
$
with
\begin{eqnarray}\label{Schur-P}
P_\l &:=&\sum_{w\in \Sr_n/S_\l}
w\left(e^\l\prod_{\stackrel{\ssc 1\le i<j\le n}{\ssc \l_i>\l_j}}
\frac{1+e^{\es_j-\es_i}}{1-e^{\es_j-\es_i}}\right)\nonumber\\
&=&\frac1{\#S_\l}\sum_{w\in \Sr_n}w
\left(e^\l\prod_{\stackrel{\ssc 1\le i<j\le n}{\ssc \l_i>\l_j}}
\frac{1+e^{\es_j-\es_i}}{1-e^{\es_j-\es_i}}\right)
\end{eqnarray}
being {\it Schur's P-function},
where $S_\l$ is the stabilizer of $\l$ in $\Sr_n$, $\#S_\l$ is the size of $S_\l$,
and $\Sr_n/S_\l$ denotes the set of minimal length coset representatives.

\section{Combinatorics of Euler modules}

\subsection{Weight diagrams}

We shall
follow \cite{BS, GS, SZ2012-2} to express  integral dominant weights by weight diagrams.
Given any $\l\in\ZZ^n$, we let
\begin{eqnarray}
\begin{aligned}
&S(\l)\LE=\{\l_i>0\,|\,1\le i\le n\}, &\quad&S(\l)\RI=\{-\l_i>0\,|\,1\le i\le n\},\\
&S(\l)=S(\l)\LE\cup S(\l)\RI,          &\quad& S(\l)\BO=S(\l)\LE\cap S(\l)\RI.
\end{aligned}
\end{eqnarray}
\begin{definition}\label{WeightDiag}\rm
Any element $\l\in\ZZ^n$ can be expressed in a unique way by a {\it weight diagram}
$D_\l$ (cf.~\eqref{Diagram-l}), which is the positive half of the real line with vertices
indexed by $\N=\{0,1,2,...\}$
such that \begin{itemize}
\ITEM
the vertex $i=0$ is associated with $\lfloor\frac{z(\l)}{2}\rfloor$ many $\times$'s,  and  if $\bar z(\l)=1$
an additional symbol $\bot$ at the top;
\ITEM
each vertex $i>0$ is associated with a unique symbol $D_\l^i\in\{\emptyset, \ <, \   >,  \ \times\}$ according to
the following rule: $D_\l^i$ is
\begin{eqnarray*}
\begin{aligned}
\emptyset,& 	\quad \text{ if \ }  i\notin S(\l),\\
<,& 		\quad \text{ if \ }  i\in S(\l)\RI\setminus S(\l)\BO,\\
>,&  		\quad \text{ if \ }  i\in S(\L)\LE\setminus S(\l)\BO, \\
\times,&  	\quad \text{ if \ }  i\in S(\l)\BO.
\end{aligned}
\end{eqnarray*}
\end{itemize}
Numerate the $\times$'s from bottom to top at vertex 0 and then from left to right by $1, 2, \dots, r$ if there are
$r$ of them in total.
\end{definition}

Thus the degree $\#\l$ of atypicality of
$\l$ is the number of $\times$'s in the weight diagram $D_\l$.
For example,
if $\fg=\fq(16)$ and \equa{l==}{\l=(\put(2,59){$\line(0,-1){48}$}
\put(2,59){$\line(1,0){5}\rb{-2pt}{$\,\ssc\g_6\,$}\line(1,0){13}$}7,5,\put(2,50){$\line(0,-1){39}$}
\put(2,50){$\line(1,0){5}\rb{-2pt}{$\,\ssc\g_5\,$}\line(1,0){13}$}4,\put(2,41){$\line(0,-1){30}$}
\put(2,41){$\line(1,0){5}\rb{-2pt}{$\,\ssc\g_4\,$}\line(1,0){13}$}2,\put(2,32){$\line(0,-1){21}$}
\put(2,32){$\line(1,0){5}\rb{-2pt}{$\,\ssc\g_3\,$}\line(1,0){13}$}1,\put(2,24){$\line(0,-1){14}$}
\put(2,24){$\line(1,0){5}\rb{-2pt}{$\,\ssc\g_2\,$}\line(1,0){13}$}0,\put(2,10){$\line(0,1){7}$}
\put(2,17){$\line(1,0){5}\rb{-2pt}{$\,\ssc\g_1\,$}\line(1,0){3}$}\put(25,10){$\line(0,1){7}$}
\put(25,17){$\line(-1,0){4}$}0,0,0,\put(2,24){$\line(0,-1){14}$}
\put(2,24){$\line(-1,0){15}$}0,-\put(2,32){$\line(0,-1){21}$}
\put(2,32){$\line(-1,0){50}$}1,-\put(2,41){$\line(0,-1){30}$}
\put(2,41){$\line(-1,0){85}$}2,-\put(2,50){$\line(0,-1){39}$}
\put(2,50){$\line(-1,0){120}$}4,-\put(2,59){$\line(0,-1){48}$}
\put(2,59){$\line(-1,0){155}$}7,-8,-10),}
with the atypical roots $$\g_1\!=\!\es_7\!-\!\es_9,\ \ \g_2\!=\!\es_6\!-\!\es_{10},\ \ \g_3\!
=\!\es_5\!-\!\es_{11},\ \ \g_4\!=\!\es_4\!-\!\es_{12},\ \ \g_5\!=\!\es_3\!-\!\es_{13},\ \ \g_6\!=\!\es_1\!-\!\es_{14},$$
then the weight diagram $D_\l$ is given \vspace*{-7pt}by
\equa{Diagram-l}{\ \ \ \ \ \ \ \ \ \
\raisebox{-5pt}{$\stackrel{\ssc\bot}{\stackrel{\ssc\!\!\!2\,\times}{\stackrel{\ssc
\!\!\!1\,\times}{\,0\,}}}$}\line(1,0){10} \raisebox{-5pt}{$\stackrel{{\ssc
\!\!\!3\,}\dis
\times}{\,1\,}$}\line(1,0){10} \raisebox{-5pt}{$\stackrel{{\ssc
\!\!\!4\,}\dis
\times}{\,2\,}$}\line(1,0){10}\raisebox{-5pt}{$\,3\,$}
\line(1,0){10}\raisebox{-5pt}{$\stackrel{{\ssc
\!\!\!5\,}\dis \times}{\,4\,}$}
\line(1,0){10}\raisebox{-5pt}{$\stackrel{\dis >}{\,5\,}$}
\line(1,0){10}\raisebox{-5pt}{$\,6\,$}
\line(1,0){10}
%
%
%
\raisebox{-5pt}{$\stackrel{{\ssc
\!\!\!6\,}\dis \times}{\,7\,}$}
\line(1,0){10}\raisebox{-5pt}{$\stackrel{\dis <}{\,8\,}$}
\line(1,0){10}
%
%
\raisebox{-5pt}{$\,9\,$}
\line(1,0){9}\raisebox{-5pt}{$\stackrel{\dis <}{\,10\,}$}
\line(1,0){9}\raisebox{-5pt}{$\,11\,$}\line(1,0){9}\raisebox{-5pt}{$\,12\,$}
\line(1,0){9}\raisebox{-5pt}{$\,13\,$}\line(1,0){9}\raisebox{-5pt}{$\,14\,$}
\line(1,0){9}\raisebox{-5pt}{$\,15\,$}\line(1,0){9} \ .\ .\ .,
}
where, for simplicity, we have left out the symbol $D_\l^i=\emptyset$ from
a vertex $i$ if $i\notin S(\l)$.   In \eqref{Diagram-l},  the number
of $\times$'s is indeed $\#\l=6$.

\subsection{Raising operators}
Given any two vertices $s, t$ with $s\le t$ in a weight diagram $D_\l$,
we define the {\it distance} $d_\l(s,t)$ from $s$ to $t$  to be the
number of $\emptyset$'s minus the number of $\times$'s strictly
between these vertices. Note that the ``distance'' can be negative,  and we remark that $d_\l(s,t)$ is the negative
of the length $\ell_\l(s,t)$ defined in \cite{SZ2012-2}.

Suppose $\#\l=r$, then the
$\times$'s in $D_\l$ are labelled by $1,...,r$.  We denote by $x_i$ the
vertex where the $i$-th $\times$ sits. For convenience, we denote $x_0=0$
(and imagine there is a $0$-th $\times$ sitting on the bottom at vertex $0$).
\begin{definition}\label{defi-length-l}
\rm\begin{itemize}\item[(1)]
Given an $i$ such that $0\le i\le r$ and any vertex $t>0$,
we define the {\it length} $\ell_\l(i,t)$ from the $i$-th $\times$ to the vertex $t$ by
\begin{itemize}
\item[$\bullet$] $\ell_\l(i,t)=d_\l(x_i,t)$ if $x_i>0$, and
\item[$\bullet$] $\ell_\l(i,t)=d_\l(x_i,t)- 2(\lfloor\frac{z(\l)}{2}\rfloor-i)- \sharp(\bot)$  if $x_i=0$,
where $\sharp(\bot)=0$ or $1$ is the number of $\bot$ at vertex $0$.
\end{itemize}
\item[(2)]
A {\it right move}
(or {\it raising operator}) on $D_\l$ is to move to the right a
$\times$, say the $i$-th one ($1\le i\le r$), to the first empty vertex  $t$ (vertex
with the symbol $\emptyset$) that meets the conditions
$\ell_\l(i,t)=0$ and $\ell_\l(i,s)<0$ for all vertices $s$
satisfying $x_i<s<t$. We denote this right move by $R_i(\l)$. We also denote $k_i=t-x_i$, then we obtain
$(k_1,...,k_r)$, which will be referred to as the {\it $r$-tuple of positive integers associated to $\l$}.
\item[(3)]
Given an element $\th=(\th_1,...,\th_r)\in\{0,1\}^r$, we set
$|\th|=\sum_{i=1}^r\th_i$ and let $\theta_{i_1}, \dots, \theta_{i_{|\th|}}$
with $1\le i_1<\cdots<i_{|\th|}\le r$ be the nonzero entries.
Associate to $\theta$ a unique
{\it right path} (or {\it raising operator}) $R_\th(\l)$ which is the collection of the
$|\th|$ right moves $R_{i_1}(\l),...,R_{i_{|\th|}}(\l)$ (see Remark \ref{move-with-l}).

We also use $R_\th(\l)$ to denote the integral dominant weight
corresponding to the weight diagram obtained in the following way.
For each $a=1, \dots, |\theta|$,
let $t_a$ be the vertex where the $i_a$-th
$\times$ of $D_\l$ is moved to by $R_{i_a}(\l)$.
Delete from $D_\l$ all the $\times$'s labeled by
$i_1$, $i_2$, $\dots$, $i_{|\theta|}$, and then place a $\times$
at each of the vertices $t_1$, $t_2$, $\dots$, $t_{|\theta|}$.
\end{itemize}
\end{definition}

As an example, we observe that the third
$\times$ in the weight diagram \eqref{Diagram-l} can only be moved
to vertex $11$, which is the move $R_3(\lambda)$. We indicate all right moves below
\equa{Diagram-l+}{\hspace*{10pt}\ \ \ \ \ \ \ \ \ \ \ \ \ \ \ %
\put(-17,58){$\line(0,-1){25}\line(1,0){330}$}\put(-17,7){$\line(0,1){30}\line(1,0){12}$}
\put(-12,14){$\line(0,1){20}\line(1,0){6}$}\put(-12,51){$\line(0,-1){29}\line(1,0){260}$}
\raisebox{-5pt}{$\stackrel{\ssc\bot}{\stackrel{\ssc\!\!\!2\,\times}{\stackrel{\ssc
\!\!\!1\,\times}{\,0\,}}}$}\line(1,0){10}
\put(4,39){$\line(0,-1){25}\line(1,0){110}$}
\raisebox{-5pt}{$\stackrel{{\ssc
\!\!\!3\,}\dis
\times}{\,1\,}$}\line(1,0){10}
\put(4,20){$\line(0,-1){6}\line(1,0){20}$\put(-15,4){\footnotesize$R_4$}}
\raisebox{-5pt}{$\stackrel{{\ssc
\!\!\!4\,}\dis
\times}{\,2\,}$}\line(1,0){10}
\put(4,20){$\vector(0,-1){12}\line(-1,0){20}$}
\raisebox{-5pt}{$\,3\,$}
\line(1,0){10}
\put(4,20){$\line(0,-1){6}\line(1,0){20}$\put(0,4){\footnotesize$R_5$}}
\raisebox{-5pt}{$\stackrel{{\ssc
\!\!\!5\,}\dis \times}{\,4\,}$}
\line(1,0){10}\raisebox{-5pt}{$\stackrel{\dis >}{\,5\,}$}
\line(1,0){10}
\put(4,20){$\vector(0,-1){12}\line(-1,0){30}$}
\raisebox{-5pt}{$\,6\,$}
\line(1,0){10}
\put(4,20){$\line(0,-1){6}\line(1,0){20}$\put(0,4){\footnotesize$R_6$}}
\raisebox{-5pt}{$\stackrel{{\ssc
\!\!\!6\,}\dis \times}{\,7\,}$}
\line(1,0){10}\raisebox{-5pt}{$\stackrel{\dis <}{\,8\,}$}
\line(1,0){10}
\put(4,20){$\vector(0,-1){12}\line(-1,0){30}$}
\raisebox{-5pt}{$\,9\,$}
\line(1,0){10}\raisebox{-5pt}{$\stackrel{\dis <}{\,10\,}$}
\line(1,0){9}
\put(7,39){$\vector(0,-1){31}\line(-1,0){120}$}\put(10,20){\footnotesize$R_3$}
\raisebox{-5pt}{$\,11\,$}\line(1,0){9}\raisebox{-5pt}{$\,12\,$}
\line(1,0){9}
\put(7,51){$\vector(0,-1){46}\line(-1,0){50}$}\put(10,30){\footnotesize$R_2$}
\raisebox{-5pt}{$\,13\,$}\line(1,0){9}\raisebox{-5pt}{$\,14\,$}\line(1,0){9}
\put(7,58){$\vector(0,-1){52}$}\put(7,58){$\line(-1,0){120}$}\put(10,30){\footnotesize$R_1$}
\raisebox{-5pt}{$\,15\,$}\line(1,0){9} \ .\ .\ .
}
In particular, if $\th=(1,1,1,1,1,1)$, we have
\[R_\th(\l)=(15,13,11,9,6,5,3,0,-3,-6,-8,-9,-10,-11,-13,-15).\]
In \eqref{Diagram-l+}, we have written $R_j$ for $R_j(\l)$ and will do
this in the future so long as there is no possibility of confusion.

\begin{remark}\label{move-with-l}
We put $\l$ in the notations $R_{i_a}(\l)$ and $L_{i_b,j_b}(\l)$ (to be defined later) to
emphasis the fact that the individual moves in a
right path $R_\theta(\l)$ or left path $L_{\bi,\bj}(\l)$  (to be defined later) are independently
applied to the weight diagram $D_\l$ of $\l$, and {\bf not} to the
resulting diagram of previous moves.
\end{remark}

Note from \cite[Main Theorem]{B1} that
the Euler module $E(\l)$ defined in \eqref{Euler-mod} is in fact a true module if
$\l\in\ZZ^n$. Using the notion of right paths,
\cite[Main Theorem]{B1} can be conveniently restated as follows.
\begin{theorem}\label{compo-theo}\cite[Main Theorem]{B1}
For any $\l,\mu\in\ZZ^n$, let
$a_{\l\mu}:=[E(\l):L(\mu)]$ be
the multiplicity  of the composition factor $L(\mu)$ in $E(\l)$. Then
\begin{eqnarray}
a_{\l\mu}=\left\{\begin{array}{ll}
2^{^{\sc\frac{z(\mu)-z(\l)}2}}&\mbox{if $\l=R_\theta(\mu)$ for some $\theta\in\{0,1\}^{\#\l}$},\\[4pt]
0&\mbox{otherwise}.
\end{array}\right.
\end{eqnarray}
\end{theorem}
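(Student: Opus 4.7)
The plan is to verify that this statement is a direct combinatorial repackaging of the Main Theorem of \cite{B1}, so the proof will consist of setting up a dictionary between Brundan's canonical-basis framework and the language of weight diagrams and right paths introduced in Definitions \ref{WeightDiag}--\ref{defi-length-l}. First I would recall Brundan's setup: he identifies the Grothendieck group of integral finite-dimensional $\fq(n)$-modules with (a certain completion of) a weight space in a tensor product Fock space for $U_q(\fb_\infty)$, under which the classes $[E(\l)]$ correspond to the standard monomial basis, and the classes $[L(\mu)]$ to the dual canonical basis (up to the normalization accounting for type M vs.\ type Q). The multiplicity $a_{\l\mu}$ is then a structure constant of the transition matrix between these two bases, computable from Brundan's algorithm for the bar involution.

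Next I would show that the indexing set of monomials matches $\ZZ^n$ via the weight-diagram bijection, with $\times$'s at positive vertices tracking paired entries $(\l_i,-\l_i)$ of $\l$, and the stacked $\times$'s (and possibly $\bot$) at vertex $0$ tracking the zero entries, using the integers $z(\l)$ and $\bar z(\l)$ of (\ref{ZH(l)}). Under this dictionary, the combinatorial rule in \cite{B1} that governs which monomials $[E(\l)]$ contain $[L(\mu)]$ with nonzero coefficient is precisely the rule ``move some subset of the $\times$'s in $D_\mu$ to the right, each to the nearest admissible empty vertex'': this is exactly what $\l = R_\theta(\mu)$ encodes, with the length condition $\ell_\mu(i,t)=0$ and $\ell_\mu(i,s)<0$ for $x_i<s<t$ (Definition \ref{defi-length-l}) reproducing Brundan's ``cap/arc'' admissibility conditions. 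This step constitutes the main content and amounts to a bookkeeping verification of the two algorithms against each other; it relies crucially on the observation that the corrected length at vertex $0$, with the subtraction $2(\lfloor z(\l)/2\rfloor-i)-\sharp(\bot)$, is exactly what makes the diagrammatic distance count match Brundan's count of intervening basis vectors at the origin.

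To pin down the coefficient, I would track the power of $2$ throughout. The Euler character formula from Section~2 gives $\ch E(\l) = 2^{\lfloor(h(\l)+1)/2\rfloor} P_\l$, so in Brundan's normalization the standard monomial attached to $\l$ carries a factor $2^{\lfloor(h(\l)+1)/2\rfloor}$, and similarly $[L(\mu)]$ carries the analogous factor for $\mu$. Since $h(\l) = n - z(\l)$, the ratio of these factors after simplification gives a power of $2$ equal to half the difference $z(\mu)-z(\l)$ (which is always a non-negative even integer whenever $\l = R_\theta(\mu)$ is realised by a right path, because each right move either leaves $z$ unchanged or decreases it by $2$, and the $\bot$ at vertex $0$ is handled uniformly). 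This yields exactly the exponent $\tfrac{z(\mu)-z(\l)}{2}$ in the statement.

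The main obstacle, as anticipated, is the careful handling of vertex $0$: unlike the positive vertices, multiple $\times$'s may be stacked there and, when $\bar z(\l)=1$, an additional symbol $\bot$ sits on top. Verifying that raising operators emanating from vertex $0$ (case $x_i=0$ in Definition \ref{defi-length-l}) reproduce Brundan's combinatorics requires arguing case-by-case according to the parity $\bar z(\mu)$ and whether the $\bot$ is retained or destroyed by the moves, and checking that the correction term $-2(\lfloor z(\mu)/2\rfloor - i) - \sharp(\bot)$ exactly compensates for the shift in Brundan's rule. Once this local analysis at the origin is settled, the statement for general $\l,\mu$ follows by direct translation from \cite[Main Theorem]{B1}.
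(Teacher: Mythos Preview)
Your proposal is correct and in fact goes further than the paper itself: the paper offers no proof of this theorem at all, treating it purely as a restatement of \cite[Main Theorem]{B1} in the language of weight diagrams and right paths, with the translation left implicit. Your sketch of the dictionary---identifying Brundan's standard and dual canonical bases with the $[E(\l)]$ and $[L(\mu)]$, matching the admissibility conditions in his algorithm to the length conditions of Definition~\ref{defi-length-l}, and tracking the power of $2$ via the normalization factors $2^{\lfloor (h(\cdot)+1)/2\rfloor}$---is exactly the verification one would carry out to justify that restatement, and your identification of the vertex-$0$ bookkeeping as the only delicate point is accurate.
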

\begin{remark}\label{RRR---}If we delete from $D_\l$  all the vertices associateed with the
symbols $<$ and $>$ and relabel vertices,
we obtain an $r$-fold atypical integral dominant weight
$\l_{\rm red}\in\Z_{++}^{2r+\bar z(\l)}$. Then Theorem \ref{compo-theo}
depends only on $\l_{\rm red}$ and $\mu_{\rm red}$, i.e.,  $a_{\l\mu}=a_{\l_{\rm red},\mu_{\rm red}}$.
\end{remark}

\subsection{Lowering operators}
Given any $\mu\in\ZZ^n$,
Theorem \ref{compo-theo} provides a convenient algorithm for determining all $\l\in\ZZ^n$ such that
$L(\mu)$ is a composition factor of the Euler module $E(\l)$.
However, the theorem is unwieldy to use when one wants to determine
the composition factors of a given Euler module $E(\l)$.  We shall derive from
Theorem \ref{compo-theo} an algorithm which is more readily applicable for the latter task.
For this, we need to introduce left moves and left paths following the general ideas of \cite{SZ2012-2}.

\begin{definition}\label{moves-left}
A {\it left move} (or {\it lowering operator}) is to move a $\times$ sitting at a vertex $x_j>0$ to the
left
\begin{itemize}\item[(1)] either to an empty vertex $s>0$ if $d_\l(s,x_j)=0$
(in this case, if the number of $\times$'s strictly
between $s$ and $x_j$ is $k$, we let $i=j-k$ and denote the left
move by $L_{ij}(\l)$);
\item[(2)] or vertex $0$ (and place it below $\bot$ in case there is a
$\bot$ at vertex $0$) if  $d_\l(0,x_j)\in-2\Z_+$ (in this case we denote the left move by $L_{0j}(\l)$).
\end{itemize}
We remark that a left move may move a $\times$ to different places, in contrast to a right move.
\end{definition}
\begin{definition}\label{left-path}
A {\it left path} (or simply a {\it path}) is the collection
of left moves $L_{i_1,j_1}(\l)$, $...$, $L_{i_k,j_k}(\l)$  (cf.~Remark \ref{move-with-l})
satisfying the following conditions
\begin{enumerate}
\item[(i)] $1\le j_1<\cdots<j_k\le r$, and
\item[(ii)] for $1\le a<b\le k$, if $i_b\le j_a$ then $i_b\le i_a$,  and
\item[(iii)] for any $i_b\le p<j_b$, if
$\ell_\l(x_p,x_{j_b})\le0$, then
$p=j_a$ for some $a<b$.
\end{enumerate}
Let $\bi=(i_1,...,i_k)$ and $\bj=(j_1,...,j_k)$, and denote by
$L_{\bi,\bj}(\l)$ the left path. If $k=0$, we use $L_\emptyset$ to
denote this empty path.
We shall also use $L_{\bi,\bj}(\l)$ to
denote the integral dominant weight corresponding to the weight
diagram obtained in the following way. Let $s_a$ be the vertex where
the $j_a$-th $\times$ of $\l$ is moved to by $L_{i_a,j_a}(\l)$ for
$a=1, 2, \dots, k$. Delete from $\l$ the $\times$'s labeled by $j_1,
j_2, \dots, j_k$ and then place a $\times$ at each of the vertices
$s_1, s_2, \dots, s_k$.
Denote \begin{equation}\label{Theta-l}
\Theta^\l= \mbox{ the set of left paths of $\l$.}\end{equation}
We also use $\Theta^\l$ to denote the set of integral dominant weights corresponding to the left paths of $\l$.
\end{definition}

\begin{remark}In Definition \ref{left-path}, \begin{itemize}\ITEM
condition (ii) means that in a left path no two moves are allowed to across each other (like\hspace*{28pt}
    $\put(0,11){$\line(-1,0){25}$}
    \put(-25,11){$\vector(0,-1){10}$}
    \put(0,11){$\line(0,-1){5}$}
    \put(-5,0){$\times$}
    \put(20,0){$\put(0,15){$\line(-1,0){35}$}
    \put(-35,15){$\vector(0,-1){14}$}
    \put(0,15){$\line(0,-1){8}$}\put(-5,0){$\times$}$} $
\hspace*{23pt});
\ITEM condition (iii) means that when a $\times$, say the $b$-th one, is moved to the left passing over another,
say the $p$-th $\times$ (like\hspace*{38pt} %
    $
\put(10,-10){\footnotesize $b$-th}
\put(-15,-10){\footnotesize $p$-th}
%
    \put(-5,0){$\times$}
    \put(10,0){$\put(0,15){$\line(-1,0){45}$}
    \put(-45,15){$\vector(0,-1){14}$}
    \put(0,15){$\line(0,-1){8}$}
    \put(-5,0){$\times$}$} $
\hspace*{22pt}), if the two $\times$'s are too close in the sense that $\ell_\l(x_p,x_{j_b})\le0$,
then the $p$-th $\times$ must also be moved to the left whose destination is
closer than that of the $b$-th $\times$
(like\hspace*{38pt}
    $
\put(10,-10){\footnotesize $b$-th}
\put(-15,-10){\footnotesize $p$-th}
    \put(0,11){$\line(-1,0){25}$}
    \put(-25,11){$\vector(0,-1){10}$}
    \put(0,11){$\line(0,-1){5}$}
    \put(-5,0){$\times$}
    \put(10,0){$\put(0,15){$\line(-1,0){45}$}
    \put(-45,15){$\vector(0,-1){14}$}
    \put(0,15){$\line(0,-1){8}$}
    \put(-5,0){$\times$}$} $
\hspace*{22pt}).
\end{itemize}
\end{remark}

We give some examples of left paths to illustrate the concept.
\begin{example}
If $\l$ is the weight in \eqref{Diagram-l}, one can easily obtain
all the possible left paths for $D_\l$. There are only 4 paths in total,
which are given by (cf.~Remark \ref{move-with-l})
\equa{left-paths-ex}{\begin{array}{lllllllllllll}L_\emptyset&&
L_{55}&&L_{66}&&L_{56}.
\end{array}}
\end{example}

\begin{example}
Let $n=2r$ and $\l=(2r-1,...,3,1,-1,-3,...,-2r+1)\in\ZZ^n$. Then for any $i,j$ with $0\le j\le i\le r$,
$j\ne1$ and $i\ge1$, we have
the left move $L_{ji}$ (note that in this example we do not have $L_{1i}$ as by
Definition \ref{moves-left}(1), $L_{ji}$ for $j\ne0$  is to move to the left the $i$-th $\times$
to some empty vertex $s>0$; if we can allow $s=0$, then in fact $L_{1i}$
coincides with $L_{0i}$; this is why we require $s>0$). The total number of left paths is
\begin{equation}\label{2r-D=l}
\#\Theta^\l=(r+1)C_{r}=\binom{2r}{r},
\end{equation}
 where $C_r:=\frac1{r+1}\binom{2r}{r}$ is the {\it $r$-th Calatan number}.

 To prove \eqref{2r-D=l}, let $\#\Theta^\l=x_r$. Clearly $x_0=1,\,x_1=2$.
Assume $r\ge2$. Considering the $r$-th $\times$, we have the following choices:
(1) it is not moved; (2) $L_{rr}$, (3) $L_{0r}$ and (4) $L_{ir}$ for $2\le i\le r-1$.

Note that for the first 3 choices, the first $(r-1)$ $\times$'s can be moved to the left
to any places so long as conditions in Definition \ref{left-path} are satisfied by these
$(r-1)$ $\times$'s. Thus there are in total $3x_{r-1}$ left paths of this kind.

For choice (4), when $i$ is fixed, we  have the following:
\begin{itemize}\ITEM the first $(i-1)$ $\times$'s can be moved to the left to any places so long as conditions in
Definition \ref{left-path} are satisfied by these  $(i-1)$ $\times$'s. Thus we have $x_{i-1}$ choices.
\ITEM The $i$-th $\times$ cannot be moved to the left by condition (ii) in Definition \ref{left-path}.
\ITEM The remaining $(r-i-1)$ $\times$'s can be moved to the left to any places not passing over the $i$-th $\times$,
so long as conditions in Definition \ref{left-path} are satisfied by these $\times$'s.
Since vertex $0$ is not involved in such left moves, in this case,
conditions in Definition \ref{left-path} are exactly the same as that of left paths for the general linear
superalgebra $\gl(m|n)$
 \cite[Definition 5.2]{SZ2012-2}.
Thus we have $C_{r-i}$ choices by \cite[Lemma 3.12]{Su2006}.
\end{itemize}
Therefore, we obtain
\begin{equation}\label{C-----sss}x_r=3x_{r-1}+\sum_{i=2}^{r-1}x_{i-1}C_{r-i}\mbox{ for }r\ge2.\end{equation}
 From this, one can deduce \eqref{2r-D=l} by induction on $r$.
\end{example}

\begin{example}
Let $n=2r+1$ and $\l=(2r,...,4,2,0,-2,-4,...,-2r)\in\ZZ^n$. Then for any $i,j$ with $0\le j\le i\le r$ and $i\ge1$, we have
the left move $L_{ji}$ (note that in this example, we have $L_{1i}$ which is different from $L_{0i}$,
in contrast to the previous example). The total number of left paths is\begin{equation}\label{2r+1-D=l}\#\Theta^\l=\frac12(r+2)C_{r+1}=\frac12\binom{2r+2}{r+1}.\end{equation}
The proof is the same as that of the previous example with \eqref{C-----sss} replaced by
$x_r=3x_{r-1}+\sum_{i=1}^{r-1}x_{i-1}C_{r-i}$ for $r\ge1$. From this we obtain \eqref{2r+1-D=l}.
\end{example}

\subsection{Structure of Euler modules}

The notion of left paths provides a useful way to describe the structure of Euler modules. We have the following result.
\begin{theorem}\label{composi-f}
Let $\l\in\ZZ^n$ be given. Then $L(\mu)$ with $\mu\in\ZZ^n$ is a composition factor of $E(\l)$ if and only if
$\mu\in\Theta^\l$. In this case, the multiplicity is \raisebox{-1pt}{$a_{\l\mu}=2^{^{\sc\frac{z(\mu)-z(\l)}2}}$}.
In particular, the number of composition factors of $E(\l)$ $($without
counting the multiplicities$)$ can vary from  $1$ to the maximal number $\frac12\binom{2r+2}{r+1}$,
where $r=\#\l$ is the degree of atypicality of $\l$.
\end{theorem}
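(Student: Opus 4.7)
The plan is to deduce Theorem \ref{composi-f} from the right-path form of Brundan's algorithm (Theorem \ref{compo-theo}) by establishing an inversion bijection between right paths terminating at $\l$ and left paths emanating from $\l$. Since Theorem \ref{compo-theo} states that $L(\mu)$ occurs in $E(\l)$ with multiplicity $2^{(z(\mu)-z(\l))/2}$ exactly when $\l = R_\theta(\mu)$ for some $\theta\in\{0,1\}^{\#\mu}$, and our target multiplicity is identical, it suffices to prove the purely combinatorial equivalence $\mu\in\Theta^\l$ if and only if $\l=R_\theta(\mu)$ for some $\theta$.

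I would first construct the candidate bijection at the level of individual moves. Given $\l=R_\theta(\mu)$ with nonzero entries $\theta_{i_1},\dots,\theta_{i_k}$, each right move $R_{i_a}(\mu)$ sends the $i_a$-th $\times$ of $D_\mu$ from a vertex $y_{i_a}$ to a vertex $t_a$ of $D_\l$, where it acquires some new label $j_a$. I would pair this with the left move $L_{i_a,j_a}(\l)$ on $D_\l$, which must return the $\times$ at $t_a$ to $y_{i_a}$. The local inverse condition is built in: the right-move requirement $\ell_\mu(i_a,t_a)=0$ and the left-move requirement $d_\l(y_{i_a},t_a)=0$ (or $\in -2\Z_+$ when $y_{i_a}=0$) are symmetric under the swap of $\emptyset$'s and $\times$'s between $D_\mu$ and $D_\l$, with the zero-vertex correction in the definition of $\ell_\l$ compensating for $\times$'s stacked at vertex $0$.

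The heart of the argument is then to show that the three conditions in Definition \ref{left-path} translate exactly into the admissibility of the collection $\{R_{i_a}(\mu)\}_{a=1}^{k}$ as a family of independent right moves on $D_\mu$ producing $\l$. Condition (i) reflects that right moves act on distinct, strictly increasing $\times$'s and preserve their left-to-right order, so the labels $j_a$ they acquire in $D_\l$ are also strictly increasing. Condition (ii) corresponds to the nesting (rather than interleaving) of the intervals $[y_{i_a},t_a]$, forced by the fact that each right move lands at the \emph{first} legal empty vertex. Condition (iii), the most delicate, is an obstruction-clearing requirement: if a left move of the $b$-th $\times$ passes over some $\times$ with $\ell_\l(x_p,x_{j_b})\le 0$, then that intermediate $\times$ must itself be raised, mirroring the fact that a reverse right move on $D_\mu$ cannot overshoot an earlier legal empty vertex unless the obstruction has been removed. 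A case analysis separating moves among positive vertices from moves involving vertex $0$ then completes the bijection, and the multiplicity formula transfers unchanged from Theorem \ref{compo-theo}.

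For the extremal count, $\#\Theta^\l\ge 1$ is clear because $L_\emptyset\in\Theta^\l$, with equality e.g.\ for any typical $\l$. The upper bound $\tfrac12\binom{2r+2}{r+1}$ is attained by $\l=(2r,2r{-}2,\dots,2,0,-2,\dots,-2r)\in\ZZ^{2r+1}$ from the example leading to \eqref{2r+1-D=l}; since that weight diagram has maximally spread-out $\times$'s together with vertex $0$ available as a lowering target, it realizes every combinatorially possible left path, so no other $r$-fold atypical $\l$ can exceed it. The main obstacle will be making condition (iii) precise in the case analysis, especially for $\times$'s moving through or from vertex $0$, where the $\sharp(\bot)$ term and the $-2(\lfloor z(\l)/2\rfloor-i)$ correction in the definition of $\ell_\l$ must be tracked to ensure the $D_\mu\leftrightarrow D_\l$ symmetry holds uniformly.
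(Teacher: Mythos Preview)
Your approach is precisely the paper's: Theorem~\ref{composi-f} is proved there in one line as a reformulation of Theorem~\ref{compo-theo} via the left-path language of Definition~\ref{left-path}, with the extremal count read off from the preceding examples. Your sketch of the right-path/left-path inversion is substantially more detailed than what the paper supplies; the only small inaccuracy is that the extremal weight giving \eqref{2r+1-D=l} has its $\times$'s at consecutive even vertices with a $\bot$ at $0$ (tightly packed, not ``maximally spread-out''), and it is this packing that maximizes the number of admissible $L_{ij}$'s.
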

\begin{proof}
The first and second statements are a reformulation of Theorem \ref{compo-theo} in terms of left paths (cf. Definition \ref{left-path}), while the third statement follows from the first.
\end{proof}

\section{Character and dimension  formulae}
\subsection{The $c$-relationship}

Following \cite{HKV,SZ2007}  we introduce the notion of $c$-relation
between atypical roots of an integral dominant weight $\l$.

\begin{definition}\label{c-related}
Let $\l$ be an $r$-fold atypical integral dominant weight with atypical roots \eqref{aty-llll}.
Suppose $1\le s\le t\le r$.
\begin{itemize}\item[(1)]
 We define the {\it distance } $d_{st}(\l)$ of
 two atypical roots $\g_s,\g_t$ of $\l$ by $d_{st}(\l)=0$
  if the $s$-th and $t$-th $\times$'s both sit at vertex $0$, and
 $d_{st}(\l)=\ell_\l(s,x_t)$ otherwise, where $x_t$ is the vertex where the $t$-th $\times$ sits and $\ell_\l(s,x_t)$ is
defined in Definition \ref{defi-length-l}.
\item[(2)]
We say that two atypical roots $\g_s,\g_t$ of $\l$
are {\it $c$-related} \cite{HKV,SZ2007} (or {\it connected} \cite{VZ}) if
 $d_{st}(\l)\le 0$, and are {\it strongly $c$-related} if
$\g_s,\g_p$ are $c$-related for all $p$ such that $s\le p\le t$.
\item[(3)] As in \cite{SZ2007}, we let
$\hat c_{st}(\l)=1$ if  $\g_s,\g_t$ are strongly $c$-related, and $\hat c_{st}(\l)=0$ otherwise.
\end{itemize}
\end{definition}

\begin{example}
Let $\l$ be as in \eqref{l==} with weight diagram \eqref{Diagram-l} and
right moves \eqref{Diagram-l+}. One immediately sees that $\hat c_{st}(\l)=1$
for $s=1,2,3$ and $s\le t\le 6$, and $\hat c_{st}(\l)=0$ for $s=4,5,6$
and $s<t\le6$. In general if $1\le s<t\le r$, $\hat c_{st}(\l)=1$ if and
only the right move $\tilde R_s(\l)$ passes over the $t$-th $\times$,
where $\tilde R_s=R_s$ if $x_s\ne0$ or there is a $\bot$ at vertex $0$,
otherwise it is the move further to the right to the first empty vertex
that is available (just image there is a $\bot$ at vertex $0$,
then  $\tilde R_s$ is the move $R_s$).
\end{example}

We need to understand $\hat c_{st}(\l)$ better. Recall that in the weight diagram $D_\l$ of $\l$,
there are $\lfloor \frac{z(\l)}{2}\rfloor$ many $\times$'s
(and in addition one $\bot$ if $\bar z(\l)=1$) located at vertex $0$. The following result can be easily verified.
\begin{lemma}\label{c-st-==}
Let $\l$ be an $r$-fold atypical integral dominant weight as in \eqref{(lambda-1)} with atypical roots
 $\g_p=\es_{m_p}-\es_{n_p}$ for $1\le p\le r$ as in \eqref{aty-llll}. Then for $1\le s<t\le r$,
\equa{hat-c-st}{
\hat c_{st}(\l){\sc}={\sc}\left\{{\sc}{\sc}\begin{array}{ll}
1{\sc}{\sc}{\sc}&\mbox{if }d_{sp}{\sc}\le{\sc}0\mbox{ for all $p$ with }s{\sc}<{\sc}p{\sc}\le{\sc} t,\\[4pt]
0&\mbox{otherwise},
\end{array}\right.
}
where $d_{sp}:=\l_{m_p}{\sc\!}-{\sc\!}\l_{m_s}{\sc\!}+m_p{\sc\!}-{\sc\!}m_s
{\sc\!}+n_s{\sc\!}-{\sc\!}n_p{\sc\!}+{\sc\!}1{\sc\!}-{\sc\!}d_s$, and
$d_s=\bar{z}(\l)$ if $s\le\lfloor \frac{z(\l)}{2}\rfloor$ or $0$ else.
\end{lemma}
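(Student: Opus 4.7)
My plan is to compute the geometric distance $d_{sp}(\l)$ directly from the weight diagram $D_\l$ and compare with the combinatorial $d_{sp}$ in \eqref{hat-c-st}.  By Definition~\ref{c-related}, $\hat c_{st}(\l)=1$ holds exactly when $d_{sp}(\l)\le 0$ for every $s\le p\le t$; since $d_{ss}(\l)=0$ is automatic, only the range $s<p\le t$ needs attention, and all that must be checked is $d_{sp}(\l)\le 0\iff d_{sp}\le 0$ for each such $p$.

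The central step is a simple count in $D_\l$.  Let $P=\#\{j\mid x_s<\l_j<x_p\}$, $N=\#\{j\mid -x_p<\l_j<-x_s\}$, and let $A$ be the number of atypical roots $\g_q$ with $x_q\in(x_s,x_p)$.  Each index counted by $P$ places either a $>$ or a $\times$ at vertex $\l_j$, each index counted by $N$ places either a $<$ or a $\times$, and each atypical root in the range is counted once by each of $P,N$ but contributes a single $\times$.  Hence the number of non-empty vertices strictly between $x_s$ and $x_p$ is $P+N-A$, and
\[
d_\l(x_s,x_p)=(x_p-x_s-1)-(P+N-A)-A=(x_p-x_s-1)-P-N.
\]
In the case $x_s>0$, strict monotonicity of $\l$ on its nonzero entries forces $P=m_s-m_p-1$ and $N=n_p-n_s-1$, so this simplifies to $d_\l(x_s,x_p)=\l_{m_p}-\l_{m_s}+m_p-m_s+n_s-n_p+1$, which is exactly the stated $d_{sp}$ (with $d_s=0$).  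In the case $x_s=0$ and $x_p>0$, the same counting applied to $(0,x_p)$ gives $d_\l(0,x_p)=\l_{m_p}+m_p-n_p+z(\l)$; the length correction $-2(\lfloor z(\l)/2\rfloor-s)-\sharp(\bot)$ of Definition~\ref{defi-length-l} together with $z(\l)=2\lfloor z(\l)/2\rfloor+\bar z(\l)$ collapses $\ell_\l(s,x_p)$ to $\l_{m_p}+m_p-n_p+2s$.  The nested arrangement of zero atypical pairs in \eqref{zero-aty} supplies the identity $n_s-m_s=2s-1+\bar z(\l)$ for $s\le\lfloor z(\l)/2\rfloor$, and substituting this converts the expression precisely into the form in \eqref{hat-c-st}.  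Finally, when $x_s=x_p=0$ the convention gives $d_{sp}(\l)=0$, while the same identity yields $d_{sp}=-2(p-s)+1-\bar z(\l)\le 0$ for $s<p$, so both inequalities hold simultaneously.

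The one genuine obstacle is bookkeeping in the $x_s=0$ case: verifying that the $\bot$-correction and the zero-vertex offset in $\ell_\l$ combine with the relation $n_s-m_s=2s-1+\bar z(\l)$ to collapse exactly into the $1-d_s$ tail in the stated formula.  Once that identity is pinned down, each of the three cases is a direct substitution.
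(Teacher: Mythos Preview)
Your verification is correct and is exactly the computation the paper omits: the paper gives no argument for Lemma~\ref{c-st-==} beyond the remark ``can be easily verified,'' and your case analysis ($x_s>0$; $x_s=0<x_p$; $x_s=x_p=0$) together with the index identity $n_s-m_s=2s-1+\bar z(\l)$ for $s\le\lfloor z(\l)/2\rfloor$ supplies that verification in full.  In Cases~1 and~2 you in fact show the stronger equality $d_{sp}(\l)=d_{sp}$, and in Case~3 both quantities are $\le 0$, which is all that is needed; this is the natural and intended route.
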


\subsection{Lexical weights}
Let $\L$ be an $r$-fold atypical regular weight
(not necessarily dominant)
with the set
$\G_\L=\{\g_1,...,\g_r\}$
of atypical roots
ordered according to (\ref{aty-llll}). As in \cite{SZ2007}, we define the {\it atypical tuple of $\L$}
\equa{tt-f}{\tt_\L:=(\l_{m_r},...,\l_{m_1})\in\Z^r,}
and call $\l_{m_s}$ the {\it $s$-th atypical entry of $\L$} for $1\!\le\!s\!\le\!r$.
We also define
\equa{ot-f}
{
\ot_\L\in\Z^{n-2r}\mbox{ \ and \ }\bar\l\in\Z^n
}
to be respectively the element obtained
from $\L$ by deleting all entries $\L_{\mm_s},\,\L_{\cp\nn_s}$ and the element obtained by changing
 all entries $\L_{\mm_s},\,\L_{\cp\nn_s}$ to zero
for $s=1,...,r$. We call $\ot_\L$ the {\it typical tuple of $\L$}, whose all entries,
called the {\it typical entries of $\L$}, have distinct absolute values by (\ref{l-ru}).

\def\PP{{\mathcal P}}In the following, we fix an $r$-fold atypical integral dominant weight $\l$.
 Denote by $\PP^\l$ the set of all integral weights $\mu$
such that $\G_\l$ is a maximal set of orthogonal atypical roots of $\mu$ (with respect to the bilinear form
\eqref{form-}) and $\bar\mu=\bar\l$. Hereafter,
 $\tt_\mu,\ot_\mu,\bar\mu$ are defined as in \eqref{tt-f} and \eqref{ot-f}. Thus any $\mu\in\PP^\l$ has the form
 \equa{mu-form}{\mu=\bar\l+\sum_{p=1}^r j_p\g_p\mbox{ \ for some \ }j_p\in\Z.}
If $\nu=\bar\l+\sum_{p=1}^r\ell_p\g_p\in\PP^\l$, we define  (cf.~\eqref{regular-def})
\equa{level---}{|\mu-\nu|=|\mu^+-\nu^+|=\sum_{p=1}^r(j_p-\ell_p)
\mbox{ \ (called the {\it relative level} of $\nu$ in $\mu$)}.}

\begin{definition}\label{defi-lexical-P-l}
\begin{itemize}\item[(1)]
We call $\mu\in {\mathcal P}^\l$ {\it lexical} if its atypical tuple $\tt_\mu$ is lexical, where
an element $a=(a_r,a_{r-1},...,a_1)\in\Z^r$ is called {\it lexical}
if
\equa{lexical}
{
a_r\ge a_{r-1}\ge ...\ge a_1.
}
\item[(2)]
Denote by $\Dr$ the subset of $\PP^\l$ consisting of the lexical weights of $\PP^\l$.
\end{itemize}
\end{definition}

We define the partial
order $\soe$ on $\PP^\l$ (which is compatible with the total order $<$ defined in \eqref{partial-order}) for $\mu,\nu\!\in\! \PP^\l$ by
\equa{g-prec-f} { \nu\soe\mu\ \ \ \Lra\ \ \ \tt_\nu\le
\tt_\mu,}
where the partial order
``$\le$'' on $\Z^r$ is defined for
$a\!=\!(a_1,...,a_r),\,b\!=\!(b_1,...,b_r) \!\in\!\Z^r$ by
\equa{partial-order-on-Z-r} { a\le b\ \ \ \Lra\ \ \ a_i\le b_i\ \
\mbox{for \ }1\le i\le r. }
 For any $\mu\in \PP^\l$, we denote $\PP^{\soe\mu}:=\{\nu\in\Dr\,|\,\nu\soe\mu\mbox{ \ and \ }\tt_\nu\in\N^r\}$, then
\begin{eqnarray}\label{P-mu==}
\PP^{\soe\mu}
&\!\!\!=\!\!\!&\!
\left.\left\{\!\bar\mu\!+\!\sum_{p=1}^r j_p\g_p\,\right|\,0\!\le\! j_1\!\le\! m_1
\mbox{ and }j_{p-1}\!\le\! j_p\!\le\! m_p\mbox{ for }p\!=\!2,...,r \right\},
\end{eqnarray}
where $m_p={\rm min}\{\mu_p,\mu_{p+1},...,\mu_r\}.$

\subsection{The action of $\Sr_r$ on $\PP^\l$}

The symmetric group $\Sr_r$ of degree $r$ acts on $\Z^r$ by
permuting entries. This action induces an action on $\PP^\l$ given by
{\def\L{\mu}\equa{(4)}
{{\sc\!\!\!\!\!\!\!\!\!} \si(\L)\!=\!(\L_1,{\sc...\,},
\put(4,-1){$\line(0,-1){7}$} \put(-25,-17){\small atypical entries
permuted} \L_{\mm_{\si^{-1}(r)}},{\sc...\,},\L_i,{\sc...\,},
\put(4,-1){$\line(0,-1){7}$}
\L_{\mm_{\si^{-1}(1)}},{\sc...\,},
\put(4,-1){$\line(0,-1){7}$} \put(-15,-17){\small atypical entries
permuted} \L_{\cp
\nn_{\si^{-1}(1)}},{\sc...\,},\L_{\cp\zeta},{\sc...\,},
\put(4,-1){$\line(0,-1){7}$} \L_{\cp
\nn_{\si^{-1}(r)}},{\sc...\,},\L_{\cp n}),\!\!\!\!\!\!\! }
}%
for
$\si\in \Sr_r$ and $\mu\in \PP^\l$. With this action on $\PP^\l$, the
group $\Sr_r$ can be regarded as a subgroup of $W$, such that
every element is of even parity.
We will need the following
\def\Sl{{\mathcal S}^\l}
\begin{definition}\label{a-l-si}
Let $\mu\in \PP^\l,\,\si\in\Sr_r$, we define
\equa{aaa-lll}{a_\mu^\si=\left\{\begin{array}{ll}0&\mbox{if $\hat c_{st}(\mu)\!=\!1$,
$s\!<\!t$, $\si^{-1}(s)\!>\!\si^{-1}(t)$ for some $s,t$,}\\[4pt]1&\mbox{otherwise},\end{array}\right.}
where $\hat c_{st}(\mu)$ is defined in \eqref{hat-c-st}. Define
\equa{S-llll}{{\mathcal S}^\mu=\{\si\in\Sr_r\,|\,a_\mu^\si=1\}.}
Namely, ${\mathcal S}^\mu$ is the subset of the symmetric group $\Sr_r$ consisting
of permutations $\si$ which do not change the order of $s <t$ when the atypical roots $\g_s$ and
$\g_t$ of $\mu$ are strongly $c$-related.
\end{definition}

\subsection{More on raising operators}First we generalise the notion of $r$-tuples of positive integers
associated to weights as follows.
\begin{definition}\label{reu-tuple}
Let $\mu$ be a regular weight (cf.~Definition \ref{regular-def}(1)).
Assume that $\mu$ is $r$-fold atypical with atypical roots $\g_1<\g_2<\cdots<\g_r$ as in \eqref{aty-llll}.
Let $w\in W$ be such that $\mu^+:=w(\l)$ is integral dominant. Then $\G_{\mu^+}=w(\G_\mu)$.
Arrange the $r$ atypical roots of $\mu^+$ as \equa{l+++}
{w(\g_{i_1})<w(\g_{i_2})<\cdots<w(\g_{i_r}).}
We define the {\it $r$-tuple of positive integers associated to $\mu$}
to be the $r$-tuple $(k_1,...,k_r)$ such that $(k_{i_1},...,k_{i_r})$ is
the $r$-tuple of positive integers associated to $\mu^+$.
\end{definition}

For each $i$ ($1\le i\le r$), we define a {\it raising operator}
$\bar R_i$ on the regular weight $\l$ by
\begin{equation}\label{Ano-Rais}
\bar R_i(\l)=\l+k_i\g_i.\end{equation}
In particular,  when $\l$ is integral dominant, by Definition \ref{defi-length-l}, we have
$R_i(\l)=(\bar R_i(\l))^+$ (recall from Definition \ref{regular-def}(1) that in general $\mu^+$ is
the ``dominant conjugate'' of $\mu$)
and
\begin{equation}\label{Rassss}
R_\theta(\l)=(\bar R_r^{\theta_r} \bar R_{r-1}^{\theta_{r-1}}\cdots\bar R^{\theta_1}_1(\l))^+
=R_{j_r}^{\theta_r}  R_{j_{r-1}}^{\theta_{r-1}}\cdots R^{\theta_1}_{j_1}(\l)
\mbox{ \ for }\theta\in\{0,1\}^r,\end{equation}
where
$j_p=p-\#\{q<p\,|\,\theta_q=1,\,\hat c_{qp}(\l)=1\}$, and the product of operators $R_i$ is their composition.
For example,  $R_iR_j(\l)=R_i(R_j(\l))$, which means applying first  the right move of the $j$-th $\times$ of $\l$ then
the right move of the $i$-th $\times$ to the resulting diagram.
Now for any $\theta=(\theta_1,...,\theta_r)\in\N^r$,  we let
$\bar R_{\theta}=\bar R_1^{\theta_1} \bar R_2^{\theta_2} \cdots  \bar R_r^{\theta_r}$
and define the operator $R'_\theta$ acting on integral dominant weights by \begin{equation}\label{Rainddddd}
R'_\theta(\l)=(\bar R_{\theta}(\l))^+
=(\bar R_1^{\theta_1} \bar R_2^{\theta_2} \cdots  \bar R_r^{\theta_r}(\l))^+
=R_1^{\theta_1} R_2^{\theta_2} \cdots  R_r^{\theta_r}(\l).
\end{equation}

\subsection{Character and dimension formulae}

For any $\l,\mu\in\ZZ^n$, we define
\begin{equation}\label{b-l-mu}
b_{\l\mu}=2^{^{\sc\frac{z(\mu)-z(\l)}2}}
\sum_{\theta\in\N^r:\,\l=R'_\theta(\mu)}(-1)^{|\theta|},\quad \mbox{ where }|\theta|=\sum_{i=1}^r\theta_i.
\end{equation}

\begin{theorem} Order the elements of $\ZZ^n$ by \eqref{partial-order},  and let $A=(a_{\l\mu})_{\l,\mu\in\ZZ^n}$,
$B=(b_{\l\mu})_{\l,\mu\in\ZZ^n}$, which are upper triangular matrices with diagonal
entries being $1$. Then $B=A^{-1}$, and it follows that
\equa{1-har}{{\rm ch\,}L(\l)=\sum_{\mu\in\ZZ^n}b_{\l\mu}{\rm ch\,}E(\mu)
=\sum_{\mu\in\ZZ^n}b_{\l\mu}2^{\lfloor\frac{h(\mu)+1}{2}\rfloor}P_\mu.}
\end{theorem}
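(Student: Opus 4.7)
The strategy is to prove the two matrix-level assertions separately: first that $A$ and $B$ are both upper triangular with $1$'s on the diagonal relative to \eqref{partial-order}, and second that $AB = I$. Once $B = A^{-1}$ is established, the character identity \eqref{1-har} is a formal consequence: Theorem~\ref{compo-theo} gives $\ch E(\l) = \sum_\mu a_{\l\mu} \ch L(\mu)$ in the Grothendieck group, so inverting yields $\ch L(\l) = \sum_\mu b_{\l\mu} \ch E(\mu)$, and the last equality in \eqref{1-har} uses the explicit formula \eqref{Schur-P} for $\ch E(\mu)$.

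For the triangularity step, I would observe that a single right move shifts $\mu$ by a strictly positive multiple $k_i\g_i$ of an atypical root $\g_i = \es_{m_i} - \es_{n_i}$ with $m_i < n_i$, so it strictly increases the $m_i$-th entry of $\mu$ while leaving all earlier entries fixed. By the definition \eqref{partial-order}, this forces $R_\theta(\mu) < \mu$ for any nonzero $\theta$, with equality when $\theta = 0$. Hence $a_{\l\mu} = 0$ unless $\l \le \mu$, and $a_{\l\l} = 1$. The identical argument for $R'_\theta$, applied through \eqref{Rainddddd}, gives the analogous triangularity of $B$. Since the intersection of $\ZZ^n$ with any interval in \eqref{partial-order} meeting the support of both $A$ and $B$ is finite, the product $AB$ is well-defined entry-by-entry.

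The main content is the identity $\sum_\mu a_{\l\mu}b_{\mu\nu} = \delta_{\l\nu}$. Substituting the definitions and using the telescoping identity $2^{(z(\mu)-z(\l))/2}\cdot 2^{(z(\nu)-z(\mu))/2} = 2^{(z(\nu)-z(\l))/2}$ to pull the $\mu$-independent factor outside the sum, one reduces the claim to
\[
\sum_{(\theta,\theta')}(-1)^{|\theta'|}\;=\;\delta_{\l\nu},\qquad\text{where }r:=\#\nu,
\]
the sum running over pairs $(\theta,\theta') \in \{0,1\}^r \times \N^r$ with $\l = R_\theta\bigl(R'_{\theta'}(\nu)\bigr)$. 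When $\l = \nu$, triangularity forces $\theta=0$ and $\theta'=0$, contributing $1$; the case $\l \ne \nu$ requires the remaining combinatorial cancellation.

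My plan for that cancellation is to pass to the raising operators $\bar R_i$ on regular weights via \eqref{Ano-Rais} and \eqref{Rainddddd}: both $R_\theta$ and $R'_{\theta'}$ then become products of $\bar R_i$'s followed by the dominant-conjugation projection $(\cdot)^+$, and the aggregate exponent vector $c = (c_1,\ldots,c_r) \in \N^r$ of the composite $R_\theta \circ R'_{\theta'}$ is determined by $\nu$ and $\l$ alone. Grouping the sum by $c$, each fibre factorises as
\[
\prod_{i=1}^{r}\Bigl(\sum_{\theta_i \in \{0,1\},\;\theta_i \le c_i}(-1)^{c_i - \theta_i}\Bigr),
\]
and each factor vanishes as soon as $c_i \ge 1$, leaving only $c = 0$, i.e.\ $\l = \nu$. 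The main obstacle will be to justify this decoupling rigorously: although the $\bar R_i$ are morally commuting additive shifts, the ``first empty vertex'' rule in Definition~\ref{defi-length-l}(2) makes the coefficients $k_i$ depend on the current weight, and one must verify that the $c$-relations \eqref{hat-c-st} among atypical roots interact with the reindexing $j_p$ in \eqref{Rassss} precisely so that every composite arises from a unique pair $(\theta,\theta')$ with $\theta + \theta' = c$ coordinate-wise. Once that bookkeeping is settled the cancellation is automatic, completing the proof of $B = A^{-1}$ and hence the character formula \eqref{1-har}.
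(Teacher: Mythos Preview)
Your reduction to the signed count over pairs $(\theta,\theta')$ is correct, and the triangularity argument is fine (you compute $AB$ whereas the paper computes $BA$, but for unitriangular matrices this is harmless). The gap is in the last paragraph: the decoupling you hope to verify is \emph{not} a matter of bookkeeping, because the claimed product structure is false.

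Take $n=4$, $\nu=(2,1,-1,-2)$ and $\l=(3,2,-2,-3)$, so $r=2$ and the weight diagrams have $\times$'s at vertices $1,2$ and $2,3$ respectively. One checks directly that the only pairs $(\theta,\theta')\in\{0,1\}^2\times\N^2$ with $R_\theta\bigl(R'_{\theta'}(\nu)\bigr)=\l$ are
\[
\bigl((0,0),(1,1)\bigr)\quad\text{and}\quad\bigl((1,0),(0,1)\bigr),
\]
both with $\theta+\theta'=(1,1)$. But the other two candidates $((0,1),(1,0))$ and $((1,1),(0,0))$ with the same coordinate sum do \emph{not} hit $\l$: they produce the diagrams with $\times$'s at $2,5$ and at $3,4$ respectively. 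So the fibre over $c=(1,1)$ has two elements, not the four your product formula $\prod_i\bigl(\sum_{\theta_i\le c_i}(-1)^{c_i-\theta_i}\bigr)$ presupposes. (A second example, $\l=(4,2,-2,-4)$, shows that $c=\theta+\theta'$ is not even constant on the set of contributing pairs.) The obstruction is exactly that once two $\times$'s are strongly $c$-related, the right move $R_i$ jumps over the other $\times$, so one application of $R_i$ is not interchangeable with one application of $\bar R_i$ in the composite, and the naive additive bookkeeping breaks down.

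The paper's argument avoids this entirely: instead of a factorisation it builds, by induction on the relative level $|\l-\mu|$, an explicit sign-reversing involution $(\theta',\theta)\mapsto(\tilde\theta',\tilde\theta)$ on the set $\Omega^\l_\mu=\{(\theta',\theta):\l=R'_{\theta'}R_\theta(\mu)\}$ with $|\tilde\theta'|=|\theta'|\pm1$. The involution moves a single unit between $\theta$ and $\theta'$ at a carefully chosen index (determined by the largest nonzero entries and the $\hat c$-relation), so it respects the actual combinatorics of the right moves rather than a hypothetical product decomposition. Your two valid pairs above are precisely each other's images under this involution, which is why the cancellation still works even though the product set does not appear.
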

\begin{proof}
It suffices to prove  that for $\mu<\l$ (cf.~\eqref{partial-order} or \eqref{g-prec-f}),
\begin{equation}\label{to-prove-a-b-}
\sum_{\nu\in\ZZ^n}b_{\l\nu}a_{\nu\mu}=0.
\end{equation}
Let ${\Omega^\l_\mu}=\{(\theta',\theta)\in\N^r\times\{0,1\}^r\,|\,\l=R'_{\theta'}R_\theta(\mu)\}$.
Then clearly,
\begin{equation}\label{to-prove-a-b-1}
\mbox{left-hand side of \eqref{to-prove-a-b-}}\ \ =\ \ 2^{^{\sc\frac{z(\mu)-z(\l)}2}}\sum_{(\theta',\theta)\in{\Omega^\l_\mu}}(-1)^{\theta'}.
\end{equation}
We shall define a bijective map $\widetilde{\,\ }:{\Omega^\l_\mu}\to{\Omega^\l_\mu},$
$(\theta',\theta)\mapsto(\tilde\theta',\tilde\theta)$ satisfying \equa{bijection-tilde}
{\widetilde{\widetilde{\,\ }}={\rm id}_{{\Omega^\l_\mu}},
\ \ \ |\tilde\theta'|=|\theta'|\pm1,\,\ \ |\tilde\theta|=|\theta|\mp1.}
For $(\theta',\theta)\in{\Omega^\l_\mu}$,
let \equa{p-q-pp}{p={\rm max}\{p\,|\,\theta_p\ne0\},\ \ \ q={\rm max}\{q\ge p\,|\,\hat c_{pq}(\mu)=0\},
\ \ \ p'={\rm max}\{p'\,|\,\theta'_{p'}\ne0\}.}
(We take $p=q=0$ if $\theta=0$, and $p'=0$ if $\theta'=0$.)
If $\theta=0$ (then $\theta'\ne0$ as $\l\ne\mu$) or $q<p'$ (then by definition of $p$ and $q$,
we have $\theta_i=0$ for $i>q$), we set $\tilde\theta'=\theta'-\epsilon_{p'}\in\N^r,\,
\tilde\theta=\theta+\epsilon_{p'}\in\{0,1\}^r$, where in general $\epsilon_i=(\d_{1i},...,\d_{ri})$.
Then $\l=R'_{\tilde\theta'}R_{\tilde\theta}(\mu)$ by \eqref{Rassss} and \eqref{Rainddddd}, i.e., $(\tilde\theta',\tilde\theta)\in\Omega_\mu^\l$.

Assume $\theta\ne0$ and $p'\le q$. We denote
\begin{eqnarray}\label{thera====}
&\!\!\!\!\!\!\!\!\!\!&
\hat\theta'=(\theta'_1,...,\theta'_{p-1},0,...,0),\ \ \bar\theta'=(0,...,0,\theta'_{p},...,\theta'_{q-1},0,...,0)
,\nonumber\\[-4pt]
&\!\!\!\!\!\!\!\!\!\!&
\hat\theta=(\theta_1,...,\theta_{p-1},0,...,0),\ \ \, \, \bar\theta=(\,\put(10,16){$\ssc p-1$}
{\overbrace{0,...,0}},\theta_{p+1},...,\theta_{q},0,...,0)
.\end{eqnarray}
Note from \eqref{Rassss} that $R_\theta(\mu)=R_{\bar\theta}R_pR_{\hat\theta}(\mu)$,
since after applying the right move $R_p$ to $R_{\hat\theta}(\mu)$, the $i$-th $\times$ for $p<i\le q$ becomes
the $(i-1)$-th $\times$ in the resulting diagram.
Then
by \eqref{Rainddddd} and definitions of $p,q,p'$ in \eqref{p-q-pp}, we have
\begin{eqnarray}
\l&\!\!\!=\!\!\!&R'_{\theta'}R_\theta(\mu)=R'_{\hat\theta'}R'_{\bar\theta'}R_{q}^{\theta'_q}
R_{\bar\theta}R_pR_{\hat\theta}(\mu)
=R'_{\hat\theta'}R'_{\bar\theta'}R_{\bar\theta}R_{p}^{\theta'_q+1}R_{\hat\theta}(\mu).
\end{eqnarray}
Denote $\hat\mu=R_{\bar\theta}R_{p}^{\theta'_q+1}R_{\hat\theta}(\mu),$
 $\bar\l=R'_{\bar\theta'}(R_{\bar\theta}(\hat\mu))$. Then $\l=R'_{\hat\theta'}(\bar\l)$.
If $\bar\l=\hat\mu$, i.e., $\bar\theta'=\bar\theta=0$,
then we set $\tilde\theta'=\theta'+\epsilon_p\in\N^r,\,\tilde\theta=\theta-\epsilon_p\in\{0,1\}^r$, and
$(\tilde\theta',\tilde\theta)\in\Omega_\mu^\l$.

Now assume $\hat\mu<\bar\l$. Then $(\bar\theta',\bar\theta)\in\Omega_{\hat\mu}^{\bar\l}$.
By induction on the relative level $|\l-\mu|$ (cf.~\eqref{level---}), we can
assume there exists a bijective map $\widetilde{\,\ }$ on $\Omega^{\bar\l}_{\hat\mu}$
satisfying \eqref{bijection-tilde}.
Now we take $\tilde\theta'=\hat\theta'+\tilde{\bar\theta}'$,
$\tilde\theta=\hat\theta+\epsilon_p+\tau(\tilde{\bar\theta})$,
where  $\tau(\tilde{\bar\theta})$ is obtained from
$\tilde{\bar\theta}$ moving the $0$ at the last entry to the first entry
 (note that $\theta=\hat\theta+\epsilon_p+\tau(\bar\theta)$ by \eqref{thera====}).

The above uniquely defines the bijection $\widetilde{\,\ }$ satisfying \eqref{bijection-tilde},
from this we see that the right-hand of \eqref{to-prove-a-b-1} is zero. Thus we have
\eqref{to-prove-a-b-}.
\end{proof}

For any $\mu\in \PP^\l$, we generalise the definition of
Schur's P-function \eqref{Schur-P} by defining
\equa{g-Schur-P}{\mbox{$\dis P_\mu:=\frac1{\#S_\mu}\sum\limits_{w\in \Sr_n}w
\left(e^\mu\prod\limits_{\stackrel{\ssc 1\le i<j\le n}{\ssc (\mu_i,\mu_j)\ne(0,0)}}
\frac{1+e^{\es_j-\es_i}}{1-e^{\es_j-\es_i}}\right),$}}
where $S_\mu$ is the stabilizer of $\l$ in $\Sr_n$.
Note from Definition \ref{regular-def} that \equa{Notzero}{P_\mu\ne0\ \Lra\ \mu\mbox{ \ is regular.}}
Also recall the definition of $\PP^{\soe\nu}$ from \eqref{P-mu==}.  We have the following result.
\begin{theorem}\label{main-theo}
Let $L(\L)$ be the finite dimensional simple $\fq(n)$-module with highest weight $\L$.
\begin{itemize}\item[\rm(1)]
The formal character of $L(\L)$ is given by
\equa{char}{\dis
\ch L(\l)
=\sum_{\si\in\Sr_r}
\sum_{\mu\in\PP^{\soe\si(\l)}} (-1)^{|\L-\mu|}2^{\frac{z(\mu)-z(\L)}{2}
+\left\lfloor\frac{h(\mu)+1}{2}\right\rfloor}a^\si_\L  P_\mu,
}
where $a^\si_\L$ is defined by \eqref{aaa-lll}.
\item[\rm(2)]
The dimension of $L(\L)$ is given by
\begin{eqnarray}\label{dim}
{\rm dim\,} L(\L)
&=&\sum_{\sc\si\in\Sr_r}\sum_{\mu\in\PP^{\soe\si(\l)}}
\sum_{B\subset\Phi_{\bar0}^+}
(-1)^{|\l-\mu|}2^{\frac{z(\mu)-z(\L)}{2}+\left\lfloor\frac{h(\mu)+1}{2}\right\rfloor}
\frac{a^\si_\L}{\#S_\mu}\nonumber\\
& &\phantom{space-}
\times (-1)^{|B\cap\Phi_{\bar0}^+(\mu)|}
\prod_{\a\in\Phi_{\bar0}^+}\frac{(\a,\mu+\sum_{\b\in B}\b+\rho_{\bar0})}{(\a,\rho_{\bar0})},
\end{eqnarray}
where $\Phi_{\bar0}^+(\mu)\!=\!\{\es_i\!-\!\es_j\!\in\!\Phi_{\bar0}^+\,|\,i\!<\!j,\,\mu_i\!=\!\mu_j\!=\!0\}$.
\end{itemize}
\end{theorem}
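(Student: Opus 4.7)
The plan is to derive both formulae from the character identity \eqref{1-har} combined with the explicit expression \eqref{b-l-mu} for $b_{\l\mu}$. Direct substitution gives
$$\ch L(\l)=\sum_{\mu\in\ZZ^n}\sum_{\theta\in\N^r:\,\l=R'_\theta(\mu)}(-1)^{|\theta|}2^{\frac{z(\mu)-z(\l)}{2}+\lfloor(h(\mu)+1)/2\rfloor}P_\mu,$$
so part (1) reduces to rewriting this double sum in the shape of the right-hand side of \eqref{char}.

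The central step is to construct a bijection
$$\Omega_\l:=\{(\mu,\theta)\in\ZZ^n\times\N^r\mid\l=R'_\theta(\mu)\}\;\longleftrightarrow\;\Omega'_\l:=\{(\si,\mu)\mid\si\in\Sr_r,\;a_\l^\si=1,\;\mu\in\PP^{\soe\si(\l)}\}$$
under which $|\theta|=|\l-\mu|$ while $z(\mu)$, $h(\mu)$ and $P_\mu$ are preserved. My construction passes through the intermediate regular weight $\nu:=\bar R_\theta(\mu)$ of \eqref{Ano-Rais}, which by \eqref{Rainddddd} satisfies $\nu^+=\l$; since $\nu$ differs from $\l$ only by a permutation of the atypical entries, it uniquely determines $\si\in\Sr_r$ with $\nu$ agreeing with $\si(\l)$ on atypical positions. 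The drop $\tt_{\si(\l)}-\tt_\mu$ then supplies the non-negative integers $(j_1,\ldots,j_r)$ that parametrise $\mu\in\PP^{\soe\si(\l)}$ via \eqref{P-mu==} and also assemble into $\theta$, so that $|\theta|=\sum_p(j_p^{\si(\l)}-j_p^\mu)=|\l-\mu|$. The indicator $a_\l^\si$ of Definition \ref{a-l-si} appears because permutations reversing the order of two strongly $c$-related atypical roots of $\l$ either reproduce a pair already counted under a different $\si$ or violate the lexical bounds $j_{p-1}\le j_p\le m_p$ in \eqref{P-mu==}; discarding them makes the correspondence bijective.

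Given \eqref{char}, the dimension formula \eqref{dim} is obtained by evaluating each $P_\mu$ at $e^\a=1$. The tool is the inclusion-exclusion expansion
$$\prod_{\a\in\Phi_{\bar0}^+\setminus\Phi_{\bar0}^+(\mu)}(1+e^{-\a})=\sum_{B\subset\Phi_{\bar0}^+}(-1)^{|B\cap\Phi_{\bar0}^+(\mu)|}e^{-\sum_{\b\in B}\b},$$
which restores the ``zero-block'' factors omitted from the product in \eqref{g-Schur-P} with matching signs. Substituting expresses $\#S_\mu\cdot P_\mu$ as a signed sum, indexed by $B\subset\Phi_{\bar0}^+$, of Weyl-character numerators for $\gl_n$ with highest weight $\mu+\sum_{\b\in B}\b$, and the classical Weyl dimension formula converts each such numerator on the diagonal into $\prod_\a(\a,\mu+\sum_\b\b+\rho_{\bar0})/(\a,\rho_{\bar0})$, producing \eqref{dim}.

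The main obstacle is the bijection above. One must show that the raising-operator combinatorics (sequential composition of the $\bar R_i$ followed by the ordering-dependent dominant conjugation $^+$) faithfully tracks the lexical description of $\PP^{\soe\si(\l)}$, and that the signs and powers of $2$ transport correctly. This reduces to controlling how the $r$-tuple of positive integers attached to a regular weight (Definition \ref{reu-tuple}) evolves along a composition of $\bar R_i$-moves, and verifying that each strongly $c$-related pair of atypical roots enforces precisely the constraint selected by $a_\l^\si=1$. Once this combinatorial lemma is in place, the dimension step is essentially bookkeeping.
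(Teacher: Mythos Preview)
Your plan coincides with the paper's proof. Both deduce \eqref{char} from \eqref{1-har} and \eqref{b-l-mu} by constructing, for each dominant $\mu$, a bijection between $\Theta^\l_\mu=\{\theta\in\N^r:\,R'_\theta(\mu)=\l\}$ and the set of $\si\in\Sl$ for which $\mu\in\PP^{\soe\si(\l)}$; the paper realises this bijection by recording, for each $\theta$, the permutation $\si_\theta$ telling which $\times$ of $D_\mu$ lands on which $\times$ of $D_\l$ under $R'_\theta$, which is precisely the $\si$ you read off from $\nu=\bar R_\theta(\mu)$. For part~(2) the paper merely refers to the analogous computation in \cite{SZ2007}, which is the Weyl-dimension manipulation you sketch; just note that your displayed inclusion--exclusion identity needs an extra factor $\prod_{\a\in\Phi_{\bar0}^+(\mu)}(1-e^{-\a})$ on the left-hand side, and that factor is exactly what the denominator of \eqref{g-Schur-P} supplies after separating the $\Phi_{\bar0}^+(\mu)$-part.
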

\begin{proof}
Part (2) of the theorem can be proven in a similar way as in \cite{SZ2007},
thus we only give the details of the proof for part (1).
By \eqref{S-llll}, we can assume $\si$ appearing in \eqref{char} is in $\Sl$.
By \eqref{Notzero}, we can assume $\mu$ appearing in \eqref{char}
is regular and lexical.
For convenience, without loss of generality, we may assume that the weight diagram $D_\l$ of $\l$
does not contain symbols $>$ and $<$ (i.e., $\l\in\Z_{++}^{2r+\bar z(\l)}$ with
$\tt_\l$ being empty if $\bar z(\l)=0$ or zero else, cf.~Remark \ref{RRR---}).
In this case, regular and lexical weights coincide with integral dominant weights.
Let $\mu\soe\l$ be fixed. We want to prove that \equa{Theta===}
{\Theta^\l_\mu:=\{\theta\in\N^r\,|\,R'_\theta(\mu)=\l\},} is in 1-1 correspondence with $\Sl$. If so,
then \eqref{char} follows from the claim and  \eqref{b-l-mu}, by noting that
$(-1)^{|\theta|}=(-1)^{|\l-\mu|}$ for $\theta\in\Theta^\l_\mu$ (which can be easily proven).

Let $\theta\in\Theta^\l_\mu$, we define the unique element $\si_\theta\in\Sr_r$ such that for each
$p=1,...,r$, the $p$-th $\times$ of $D_\mu$ is moved to the $\si_\theta(p)$-th $\times$ of
$D_\l$ by the raising operator $R'_\theta$.
Note that if $1\le s<t\le r$ such that $\bar c_{st}(\l)=1$, then the $t$-th $\times$ of
$D_\l$ has to be reached by $\si^{-1}_\theta(t)$-th $\times$ of $D_\mu$
 earlier than the $s$-th $\times$ of $D_\l$
 by $\si^{-1}_\theta(s)$-th $\times$ of $D_\mu$
by the rasing operator
 $R'_\theta$ (for example, assume $\l$ is as in \eqref{Diagram-l+}, if the $3$-rd $\times$ of $D_\l$ is
obtained by some raising operator from some $\times$ of some $D_\mu$ before the $4$-th $\times$,
then there is no way that the $4$-th $\times$ of $D_\l$ can be obtained by some rasing operator
from any $\times$ of any $D_\mu$). Thus by definition of $R'_\theta$ in \eqref{Rainddddd},
we have $\si^{-1}_\theta(t)>\si^{-1}_\theta(s)$. This proves $\si_\theta\in\Sl$.
Now one can easily see that the map $\Theta^\l_\mu\to\Sl:\theta\mapsto\si_\theta$ is a bijection.
This proves the claim and the theorem.\end{proof}

\subsection{A corollary}
We can manipulate Theorem \ref{main-theo}  to express ${\rm ch\,}L(\l)$ and ${\rm dim\,}L(\l)$ in more convenient forms.
For any $r$-fold atypical integral dominant weight
$\l$  with atypical roots $\g_1<...<\g_r$ as in \eqref{aty-llll},
we define the {\it normal cone with vertex $\l$} (cf.~\eqref{tt-f}, \eqref{ot-f})  in analogy to \cite{SZ2007}:
\begin{eqnarray}\label{(13)}
&\dis\NC\l\!\!\!&\dis=
\left.\left\{\mu:=\l-\sum\limits_{s=1}^r i_s\g_s\,\right|\,i_s\ge0,\,\tt_\mu\in\N^r\right\}\nonumber\\&&=
\dis\left.\left\{\!\bar\l\!+\!\sum_{p=1}^r j_p\g_p\,\right|\,0\!\le\! j_p\!\le\! \l_{m_p}\mbox{ for }p\!=\!1,...,r \right\}.
\end{eqnarray}
We also define $\TC\l$,
which was referred to as the {\it truncated cone with vertex $\l$} in \cite{VHKT},
to be the subset of $\NC\l$
consisting of weights $\mu$ such that
the $s$-th entry of the atypical tuple $\tt_\mu$
is smaller than or equal to
the $t$-th entry of $\tt_\mu$
when the atypical roots $\g_s,\g_t$ of $\l$ (not $\mu$)
are strongly $c$-related
for $s<t$. Namely
\equa{cf1}{
\begin{array}{c}
\mbox{$
\TC\l=\{\mu\in\NC\l\,|\,
\mu_{\cp \mm_s}\le
\mu_{\cp \mm_t}\mbox{ \ if \ }\dd^\l_{s,t}=1\mbox{ \ for \ }s<t\}.
$}
\end{array}
}

\def\L{\lambda}\def\l{\mu}

For each $\mu\in\TC\lambda$, we define $b^\L_\mu\in\C$ as follows.
We set $b^\L_\mu=0$ if $\mu$ is not regular and $b^\L_\mu=b^{\L_{\rm red}}_{\mu_{\rm red}}$ otherwise,
where $\L_{\rm red}$ is defined as in
 Remark \ref{RRR---} and $\mu_{\rm red}$ is defined as follows: First take $\si\in\Sr_n$ such that
 $\nu:=\si(\mu)$ is dominant. Then we have $\nu_{\rm red}$.
Take $\mu_{\rm red}=\si^{-1}(\nu_{\rm red})$ (note that $\si^{-1}$ induces an action on atypical
entries of $\mu$, thus induces an action on $\nu_{\rm red}$).
Thus we can suppose $\L=\L_{\rm red}$ and $\mu=\mu_{\rm red}$. Note that in this case, the $p$-th atypical root of $\l$ is $\g_p=\es_{r-p+1}-\es_{n-r+p}$, and $\L_p=\L_{m_{r-p+1}}$. If $\mu=0$, we define (cf.~\eqref{Theta===})
\equa{SSSSSS}{b^\L_0:=\#\Theta^\lambda_0
=\frac{\#{\mathcal S}^\L}{(r+\bar z(\L))!} d_{\L_{1}+s_1,r}d_{\L_{2}+s_2,r-1}\cdots d_{\L_{r}+s_r,1},
}
where $d_{i,k}={\rm min}\left\{\left\lfloor\frac{i+1-\bar z(\L)}{2}\right\rfloor,k\right\}$,
$s_p$ is the number of $q<p$ such that $\hat c_{pq}(\L)=1$, and where the last equality follows from combinatorics.
Suppose $\mu\not=0$. Set $i>0$ to be the largest such that $\mu_i>0$.
Let $\mu'\in\Z^{n-2}$ be obtained from $\mu$ by deleting its $i$-th and $(m+1-i)$-th entries,
and let $\L'\in\Z_+^{n-2}$ be obtained from $\L$  by deleting its $i$-th and $(n+1-i)$-th entries and
subtracting its $j$-th entry by $2$ and adding its $(n+1-j)$-th entry by $2$ for all $j<i$.
Then $b^\L_\mu=b^{\L'}_{\mu'}.$
This defines all $b^\L_\mu$.

Note that $b^\lambda_\mu$ can also be described in the following way. For any fixed $\lambda$, each regular
$\mu\in\TC\lambda$ uniquely corresponds to a weight diagram $D_\mu$ which is simply the
diagram $D_{\mu^+}$ (cf.~\eqref{regular-def}) but with the $i_p$-th $\times$ of $D_{\mu^+}$
relabeled as the $p$-th $\times$ of $D_\mu$ (where the $p$-th atypical root of $\mu$ corresponds
to the $i_p$-th atypical root of $\mu^+$, cf.~\eqref{l+++}
in Definition \ref{reu-tuple}) for each $p=1,...,r$. Now similar to Remark \ref{RRR---},
delete all vertices of $D_\lambda$ which associate with symbols $<,\,>$, and further delete
the vertex associated with $i_p$-th $\times$ of $D_\lambda$ such that the $p$-th $\times$ of
$D_\mu$ is not located at vertex $0$ for all $p$. Relabeling the remaining vertices, we
obtain a dominant weight, denoted $\nu$. Then $b^\lambda_\mu=b^\nu_0$
(which is defined as in \eqref{SSSSSS}).
\begin{corollary}\label{coooo}
The formal character and dimension
of the finite dimensional irreducible $\fq(n)$-module $L(\L)$ are respectively given by
\begin{eqnarray}\label{1formula}
\ch L_\L &=& \sum_{\l\in \TC\L}(-1)^{|\L-\mu|} 2^{\frac{z(\mu)-z(\L)}{2}+\left\lfloor\frac{h(\mu)+1}{2}\right\rfloor}
b^\L_\mu P_\mu,\\ \label{1dim}
{\rm dim\,} L(\L) &=& \sum_{\mu\in \TC\L}\sum_{B\subset\Phi_{\bar0}^+}
(-1)^{|\L-\mu|} 2^{\frac{z(\mu)-z(\L)}{2}+\left\lfloor\frac{h(\mu)+1}{2}\right\rfloor}
\frac{b_\mu^\L}{\#S_\mu}  \nonumber\\
&  &  
\times (-1)^{|B\cap\Phi_{\bar0}^+(\mu)|}
\prod_{\a\in\Phi_{\bar0}^+}\frac{(\a, \mu+\sum_{\b\in B}\b+\rho_{\bar0})}{(\a, \rho_{\bar0})}.
\end{eqnarray}
\end{corollary}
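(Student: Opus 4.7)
The plan is to deduce Corollary \ref{coooo} from Theorem \ref{main-theo} by interchanging the order of summation and then identifying the resulting combinatorial coefficient with $b^\L_\mu$. The dimension formula \eqref{1dim} follows from the character formula \eqref{1formula} by exactly the same limit procedure as in \cite{SZ2007}, so I will concentrate on \eqref{1formula}.

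First I would apply Remark \ref{RRR---} to reduce to the case $\L = \L_{\rm red}$, so that the weight diagram $D_\L$ carries no $<$ or $>$ symbols; this is legitimate because the only contributions in \eqref{char} come from regular, lexical $\mu$ and both $\#\L$ and the $a^\si_\L$ depend only on the reduced data (and because the definition of $b^\L_\mu$ given in the corollary is set up to respect this reduction). In this reduced situation, every lexical $\mu \in \PP^{\soe\si(\L)}$ that appears is already an integral dominant weight, so summing over $\si$ before summing over $\mu$ in \eqref{char} is the same as writing
\begin{equation*}
\ch L(\L) = \sum_{\mu\in\PP^\L} (-1)^{|\L-\mu|} 2^{\frac{z(\mu)-z(\L)}{2}+\lfloor\frac{h(\mu)+1}{2}\rfloor} P_\mu \cdot c^\L_\mu,\quad c^\L_\mu := \#\{\si \in \mathcal{S}^\L : \mu \in \PP^{\soe\si(\L)}\}.
\end{equation*}
Since $P_\mu = 0$ unless $\mu$ is regular by \eqref{Notzero}, the sum may be restricted to regular $\mu$. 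The task is therefore to prove the purely combinatorial identity $c^\L_\mu = b^\L_\mu$, with the understanding that both sides vanish outside $\TC\L$.

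Next I would show that $c^\L_\mu = 0$ whenever $\mu \notin \TC\L$. By definition, $\mu \in \PP^{\soe\si(\L)}$ forces $\tt_\mu \le \si \cdot \tt_\L$ componentwise, so the existence of even one $\si \in \mathcal{S}^\L$ realising the inequality requires that whenever $\g_s, \g_t$ are strongly $c$-related in $\L$ with $s<t$ we have $\mu_{m_s} \le \mu_{m_t}$; this is precisely condition \eqref{cf1}. The positivity of $j_p$ in \eqref{P-mu==} ensures the cone inclusion $\mu \in \NC\L$. Conversely, for $\mu \in \TC\L$ one can produce at least one valid $\si$ by a greedy procedure respecting the strong $c$-relation.

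To get the full equality $c^\L_\mu = b^\L_\mu$ on $\TC\L$, I would induct on the number $h(\mu)$ of nonzero atypical entries of $\mu$. For the base case $\mu = 0$ (the vertex of the normal cone), each $\si \in \mathcal{S}^\L$ contributes exactly one raising path into $\L$, namely the unique $\theta = (\theta_1,\ldots,\theta_r) \in \N^r$ realising $R'_\theta(0) = \L$ consistent with that permutation; the number of such $\theta$ is governed by the bounds $d_{\L_p + s_p, r-p+1}$ arising from the lengths imposed by \eqref{aty-llll} and the shift $s_p$ counting the earlier $\times$'s that strong $c$-relation forces to be raised first (yielding the product in \eqref{SSSSSS}). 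Dividing the total count of raising paths by the $(r+\bar z(\L))!$ orderings of the $\times$'s landed at vertex $0$ explains the prefactor $\#\mathcal{S}^\L / (r+\bar z(\L))!$. This matches $b^\L_0$. For the inductive step, pick the largest $i$ with $\mu_i > 0$ and peel off the $i$-th and $(n+1-i)$-th entries: the induced bijection between admissible $\si$ for $(\L,\mu)$ and those for $(\L',\mu')$ of Corollary \ref{coooo}'s inductive clause gives $c^\L_\mu = c^{\L'}_{\mu'} = b^{\L'}_{\mu'} = b^\L_\mu$.

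The main obstacle will be the base case verification $c^\L_0 = b^\L_0$, because it requires matching two quite different combinatorial descriptions: on one side a count of permutations $\si \in \mathcal{S}^\L$ together with lattice tuples bounded by the componentwise minima defining $\PP^{\soe\si(\L)}$, and on the other the explicit product of $\min$-shifted quantities in \eqref{SSSSSS}. The strong $c$-relation bookkeeping encoded by the $s_p$ shifts and $\mathcal{S}^\L$ is the subtle point; once this is established, the recursive reduction and the dimension formula follow routinely as in \cite[\S4]{SZ2007}.
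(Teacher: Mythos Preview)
Your overall strategy—deduce the corollary from Theorem \ref{main-theo} by interchanging the order of summation and identifying the resulting coefficient—is exactly the paper's approach; indeed the paper's proof is the single sentence ``follows from the definition of $b^\L_\mu$ and Theorem~\ref{main-theo}'', precisely because $b^\L_\mu$ has been \emph{defined} so that the corollary is a restatement of the theorem.

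However, your execution has a genuine gap: the termwise identity $c^\L_\mu=b^\L_\mu$ that you set out to prove is false. By Definition~\ref{defi-lexical-P-l} the sets $\PP^{\soe\si(\L)}$ contain only \emph{lexical} weights, so your $c^\L_\mu$ vanishes on every non-lexical $\mu$; but $\TC\L$ in general contains non-lexical regular weights with $b^\L_\mu\ne0$. For example, take $n=4$, $\L=(3,1,-1,-3)$, so $r=2$, $\hat c_{12}(\L)=0$ and $\mathcal S^\L=\Sr_2$. The weight $\mu=(0,1,-1,0)\in\TC\L$ is non-lexical, hence $c^\L_\mu=0$, yet the recursive definition gives $b^\L_\mu=b^{(1,-1)}_0=1$. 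Even at the dominant $\nu=\mu^+=(1,0,0,-1)$ one checks directly that $c^\L_\nu=2$ while $b^\L_\nu=1$.

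What is actually true is the orbit-summed identity obtained from $P_\mu=P_{\mu^+}$ for regular $\mu$: for each dominant $\nu$,
\[
c^\L_\nu\;=\!\!\sum_{\substack{\mu\in\TC\L\\ \mu^+=\nu}} b^\L_\mu.
\]
In the example above this reads $2=1+1$. The recursive and diagrammatic definitions of $b^\L_\mu$ preceding the corollary are engineered precisely so that this regrouping holds tautologically—the fibre over a given $\nu$ in $\TC\L$ corresponds to the different ways the nonzero atypical entries of $\nu$ can be matched to atypical slots of $\L$, and $b^\L_\mu$ records the residual raising-path count on the remaining zero entries. Your inductive ``peel off the largest nonzero entry'' step tracks the recursion for a single $b^\L_\mu$ but does not establish the summed identity; to repair the argument you must either exhibit, for each dominant $\nu$, a bijection between $\{\si\in\mathcal S^\L:\nu\in\PP^{\soe\si(\L)}\}$ and the disjoint union over $\mu\in\TC\L$ with $\mu^+=\nu$ of the data counted by $b^\L_\mu$, or simply observe (as the paper does) that this is exactly how $b^\L_\mu$ was defined.
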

\begin{proof}
The result follows from the definition of $b^\lambda_\mu$ and Theorem \ref{main-theo}.
\end{proof}


\end{document}